\documentclass{amsart}
\setlength{\textwidth}{5.7in}
\setlength{\textheight}{9in}
\setlength{\oddsidemargin}{0in}
\setlength{\evensidemargin}{0in}

\title[Dolbeault cohomology of  solvmanifolds]
{Techniques of computations of Dolbeault cohomology of  solvmanifolds}

\author{Hisashi Kasuya}

\usepackage{amssymb}
\usepackage{amsmath}
\usepackage{amscd}
\usepackage{amstext}
\usepackage{amsfonts}
\usepackage[all]{xy}
\usepackage{multicol}

\theoremstyle{plain}
\newtheorem{theorem}{Theorem}[section] 
\theoremstyle{assumption}
\newtheorem{Assumption}[theorem]{Assumption} 
\theoremstyle{remark}
\newtheorem{remark}{Remark}
\theoremstyle{lemma}
\newtheorem{lemma}[theorem]{Lemma}
\theoremstyle{definition}
\newtheorem{definition}[theorem]{Definition}
\theoremstyle{proposition}
\newtheorem{proposition}[theorem]{Proposition}
\theoremstyle{corollary}
\newtheorem{corollary}[theorem]{Corollary}
\theoremstyle{remark}

\address[H.kasuya]{Graduate school of mathematical science university of tokyo japan }
\curraddr{}
\email{khsc@ms.u-tokyo.ac.jp}

\keywords{Dolbeault cohomology, solvmanifold, invariant complex structure, holomorphic line bundle}
\subjclass[2010]{22E25, 53C30,53C55}

\newcommand{\C}{\mathbb{C}}
\newcommand{\R}{\mathbb{R}}

\newcommand{\Q}{\mathbb{Q}}
\newcommand{\Z}{\mathbb{Z}}
\newcommand{\g}{\frak{g}}
\newcommand{\n}{\frak{n}}

\begin{document} 

\maketitle
\begin{abstract}
We consider semi-direct products $\C^{n}\ltimes_{\phi}N$ of Lie groups  with  lattices $\Gamma$ such that $N$ are nilpotent Lie groups with left-invariant complex structures.
We compute the Dolbeault cohomology of direct sums of holomorphic line bundles over $G/\Gamma$ by using the Dolbeaut cohomology of the Lie algebras of the direct product $\C^{n}\times N$.
As a corollary of this computation, we can compute the Dolbeault cohomology $H^{p,q}(G/\Gamma)$ of $G/\Gamma$ by using a finite dimensional cochain complexes.
Computing some examples, we observe that the Dolbeault cohomology varies for choices of lattices $\Gamma$.
\end{abstract}
\section{Introduction}
Let $G$ be  a simply connected solvable Lie group and $\g$ the Lie algebra of $G$.
We assume that $G$ admits a lattice $\Gamma$ and a left-invariant complex structure $J$.
We consider the Dolbeault cohomology $H^{\ast,\ast}_{\bar\partial }(G/\Gamma)$ of the complex solvmanifold $G/\Gamma$.
We also consider the cohomology $H^{\ast,\ast}_{\bar\partial}(\g)$ of the differential bigraded algebra (shortly DBA) $\bigwedge^{\ast,\ast} \g^{\ast}$ of the complex valued left-invariant differential forms with the operator $\bar\partial$.

The purpose of this paper is to prove that one can compute the Dolbeault cohomology  of certain class of solvmanifolds $G/\Gamma$ by using finite dimensional DBAs.
In this paper we consider a Lie group $G$ as in the following assumption.

\begin{Assumption}\label{Ass}
$G$ is the semi-direct product $\C^{n}\ltimes _{\phi}N$ so that:\\
(1) $N$ is a simply connected nilpotent Lie group with a left-invariant complex structure $J$.\\
Let $\frak a$ and $\n$ be the Lie algebras of $\C^{n}$ and $N$ respectively.\\
(2) For any $t\in \C^{n}$, $\phi(t)$ is a holomorphic automorphism of $(N,J)$.\\
(3) $\phi$ induces a semi-simple action on the Lie algebra $\n$ of $N$.\\
(4) $G$ has a lattice $\Gamma$. (Then $\Gamma$ can be written by $\Gamma=\Gamma^{\prime}\ltimes_{\phi}\Gamma^{\prime\prime}$ such that $\Gamma^{\prime}$ and $\Gamma^{\prime\prime}$ are  lattices of $\C^{n}$ and $N$ respectively and for any $t\in \Gamma^{\prime}$ the action $\phi(t)$ preserves $\Gamma^{\prime\prime}$.) \\
(5) The inclusion $\bigwedge^{\ast,\ast}\n^{\ast}\subset A^{\ast,\ast}(N/\Gamma^{\prime\prime})$ induces an isomorphism 
\[H^{\ast,\ast}_{\bar\partial}(\n)\cong H^{\ast,\ast}_{\bar\partial }(N/\Gamma^{\prime\prime}).\]
\end{Assumption}

Let $\alpha: \C^{n}\to \C^{\ast}$ be a character (i.e. a representation on $1$-dimensional vector space $\C_{\alpha}$)  of $\C^{n}$ .
By the projection $\C^{n}\ltimes _{\phi}N\to \C^{n}$, we regard $\alpha$ as a character of $G$ as in Assumption \ref{Ass}.
We consider the holomorphic line bundle $L_{\alpha}=(G\times \C_{\alpha})/\Gamma$ and the  Dolbeault complex  $(A^{\ast,\ast}(G/\Gamma,L_{\alpha} ),\bar \partial)$ with values in the line bundle $L_{\alpha}$.
Let  $ {\mathcal L}$ be the set of isomorphism classes of line bundles over $G/\Gamma$ given by  characters of $\C^{n}$.
We consider the direct sum
\[\bigoplus_{L_{\beta}\in {\mathcal L}} A^{\ast,\ast}(G/\Gamma,L_{\beta})
\]
 of Dolbeault complexes.
Then by the wedge products and the tensor products, the direct sum $\bigoplus_{L_{\beta}\in {\mathcal L}} A^{\ast,\ast}(G/\Gamma,L_{\beta})$ is a DBA.

\begin{theorem}\label{MMMTTT}
There exists a subDBA $A^{\ast,\ast}$ of 
\[\bigoplus_{L_{\beta}\in {\mathcal L}} A^{\ast,\ast}(G/\Gamma,L_{\beta})\]
 such that
we have a DBA isomorphism $\iota : \bigwedge ^{\ast,\ast}({\frak a}\oplus \n)^{\ast}\cong A^{\ast,\ast}$ and
 the inclusion 
\[\Phi:A^{\ast,\ast}\to \bigoplus_{L_{\beta}\in {\mathcal L}} A^{\ast,\ast}(G/\Gamma,L_{\beta})\] induces a cohomology isomorphism. 
\end{theorem}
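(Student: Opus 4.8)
The plan is in two stages: first exhibit $A^{\ast,\ast}$ and the map $\iota$ by an explicit character twist of left-invariant forms, and then prove that $\Phi$ is a quasi-isomorphism by descending to the Mostow torus bundle.

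\emph{Stage 1: construction of $A^{\ast,\ast}$ and $\iota$.} By Assumption \ref{Ass}(2)--(3) the action of $\C^{n}$ on $\n_{\C}$ is by $\C$-linear diagonalizable maps preserving $\n_{\C}=\n^{1,0}\oplus\n^{0,1}$; simultaneously diagonalizing, I fix a basis of weight vectors and the dual left-invariant coframe $dt_{1},\dots,dt_{n},d\bar t_{1},\dots,d\bar t_{n},\omega_{1},\dots,\omega_{m},\bar\omega_{1},\dots,\bar\omega_{m}$ of $\g_{\C}^{\ast}$, where $\phi(t)$ scales $\omega_{k}$ by a character $\alpha_{k}\colon\C^{n}\to\C^{\ast}$ and $\bar\omega_{k}$ by $\bar\alpha_{k}$. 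For a weight monomial $\omega=dt^{I}\wedge d\bar t^{J}\wedge\nu$ (with $\nu$ a wedge of the $\omega_{k}$'s and $\bar\omega_{k}$'s) let $\alpha_{\omega}\colon\C^{n}\to\C^{\ast}$ be the corresponding product of $\alpha_{k}^{\pm1},\bar\alpha_{k}^{\pm1}$, regard it as a character of $G$, and set $\iota(\omega):=\alpha_{\omega}\cdot\omega$; this is a smooth section-valued form lying in $A^{\ast,\ast}(G/\Gamma,L_{\alpha_{\omega}})$ with $L_{\alpha_{\omega}}\in{\mathcal L}$ (one checks $R_{\gamma}^{\ast}(\alpha_{\omega}\,\omega)=\alpha_{\omega}(\gamma)\,\alpha_{\omega}\,\omega$ for $\gamma\in\Gamma$, since $\omega$ is left-$G$-invariant and $\alpha_{\omega}$ factors through $\C^{n}$). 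Extend $\iota$ linearly and put $A^{\ast,\ast}:=\iota\big(\bigwedge^{\ast,\ast}(\frak a\oplus\n)^{\ast}\big)$. As the wedge product ignores the Lie bracket and $\alpha_{\omega\wedge\omega'}=\alpha_{\omega}\alpha_{\omega'}$, the map $\iota$ is multiplicative; that it intertwines the differential of the \emph{direct product} $\C^{n}\times N$ with $\bar\partial$ and $\partial$ rests on the single identity $d_{\g}\omega=d_{\frak a\oplus\n}\omega-\lambda_{\omega}\wedge\omega$, where $\alpha_{\omega}=e^{\lambda_{\omega}}$ with $\lambda_{\omega}\in\frak a_{\C}^{\ast}$: then $d(\alpha_{\omega}\omega)=\alpha_{\omega}\lambda_{\omega}\wedge\omega+\alpha_{\omega}d_{\g}\omega=\alpha_{\omega}\,d_{\frak a\oplus\n}\omega=\iota(d_{\frak a\oplus\n}\omega)$, the two $\lambda_{\omega}$-terms cancelling, and taking $(p,q)$-components gives the same for $\bar\partial$ and $\partial$. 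Hence $A^{\ast,\ast}$ is a subDBA of $\bigoplus_{L_{\beta}\in{\mathcal L}}A^{\ast,\ast}(G/\Gamma,L_{\beta})$ and $\iota$ is a DBA isomorphism onto it.

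\emph{Stage 2: $\Phi$ induces a cohomology isomorphism.} Cohomology commutes with the direct sum, so it suffices to understand each $H^{\ast,\ast}_{\bar\partial}(G/\Gamma,L_{\beta})$ through the Mostow fibration $N/\Gamma^{\prime\prime}\to G/\Gamma\to T:=\C^{n}/\Gamma^{\prime}$, whose fibres are nilmanifolds and on which every $L_{\beta}$ restricts trivially. Introduce the intermediate subDBA $B^{\ast,\ast}$ of forms in $\bigoplus_{\beta}A^{\ast,\ast}(G/\Gamma,L_{\beta})$ whose restriction to each fibre is left-$N$-invariant, so that $A^{\ast,\ast}\subset B^{\ast,\ast}$. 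First, $B^{\ast,\ast}\hookrightarrow\bigoplus_{\beta}A^{\ast,\ast}(G/\Gamma,L_{\beta})$ is a quasi-isomorphism: it is a morphism of the spectral sequences of the holomorphic fibre bundle which on the fibre is the inclusion $\bigwedge^{\ast,\ast}\n^{\ast}\subset A^{\ast,\ast}(N/\Gamma^{\prime\prime})$ --- a quasi-isomorphism by Assumption \ref{Ass}(5) --- and the identity on the complex torus base $T$, so a comparison of spectral sequences applies, also after forming the direct sum over ${\mathcal L}$. Second, $B^{\ast,\ast}$ is the total complex of $\bigoplus_{\beta\in{\mathcal L}}A^{\ast,\ast}(T,{\mathcal H}_{\beta})$, where ${\mathcal H}_{\beta}$ is the flat bundle over $T$ with fibre $\bigwedge^{\ast,\ast}\n^{\ast}$, internal differential $\bar\partial_{\n}$ (resp.\ $\partial_{\n}$), and monodromy $\phi$ twisted by $\beta$. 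By Assumption \ref{Ass}(3) write $\bigwedge^{\ast,\ast}\n^{\ast}=\bigoplus_{\chi}V_{\chi}$ as a sum of $\phi$-weight spaces (finitely many characters $\chi$ of $\C^{n}$, each $\bar\partial_{\n}$- and $\partial_{\n}$-stable since $\phi$ acts holomorphically); then ${\mathcal H}_{\beta}=\bigoplus_{\chi}V_{\chi}\otimes L_{\chi\beta}$ as flat bundles, and Künneth gives
\[H^{\ast,\ast}_{\bar\partial}(B^{\ast,\ast})=\bigoplus_{\beta\in{\mathcal L}}\bigoplus_{\chi}H^{\ast,\ast}_{\bar\partial}(T,L_{\chi\beta})\otimes H^{\ast,\ast}_{\bar\partial}(V_{\chi},\bar\partial_{\n}).\]
Now the Dolbeault cohomology of a flat holomorphic line bundle on the complex torus $T$ vanishes unless the bundle is holomorphically trivial, in which case it is $\bigwedge^{\ast,\ast}\frak a^{\ast}$; and for each fixed $\chi$ there is exactly one class $\beta\in{\mathcal L}$, namely $\beta$ the class of $\chi^{-1}$, with $L_{\chi\beta}$ trivial. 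Summing over $\chi$ yields $H^{\ast,\ast}_{\bar\partial}(B^{\ast,\ast})\cong\bigwedge^{\ast,\ast}\frak a^{\ast}\otimes H^{\ast,\ast}_{\bar\partial}(\n)$, and tracing the definition of $\iota$ shows that $\iota(dt^{I}\wedge d\bar t^{J}\wedge\nu)$ for $\nu\in V_{\chi}$ lands precisely in the $\beta=\chi^{-1}$ summand with trivial monodromy; thus $A^{\ast,\ast}\hookrightarrow B^{\ast,\ast}$ realizes, summand by summand, the inclusion of invariant forms $\bigwedge^{\ast,\ast}\frak a^{\ast}\otimes V_{\chi}\hookrightarrow A^{\ast,\ast}(T)\otimes V_{\chi}$, a quasi-isomorphism over the torus. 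Composing the two steps shows $\Phi$ is a quasi-isomorphism, completing the proof.

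The steps carrying the content are the spectral-sequence comparison along the Mostow fibration --- where Assumption \ref{Ass}(5) enters and where one must verify the argument survives the possibly infinite direct sum over ${\mathcal L}$ --- and the vanishing of Dolbeault cohomology of nontrivial flat line bundles on $T$ together with the bookkeeping matching the surviving characters $\chi^{-1}$ with the $\phi$-weights occurring in $\bigwedge^{\ast,\ast}\n^{\ast}$. I expect this last matching to be the main obstacle, since it is exactly what forces the particular twist defining $\iota$ and the particular indexing set ${\mathcal L}$.
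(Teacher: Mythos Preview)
Your overall strategy is correct, but it diverges from the paper's in a substantive way.  In the paper the map $\iota$ is built using not the raw weight characters $\alpha_j$ but their unique \emph{unitary} representatives $\beta_j$ (Lemma~\ref{uniq}): one sets $\iota(y_j)=\alpha_j^{-1}y_j\otimes(\beta_j v_{\beta_j^{-1}})$, and $\bar\partial$-compatibility follows because $\alpha_j^{-1}\beta_j$ is holomorphic.  This unitary normalisation is not cosmetic; it is exactly what makes the paper's Stage~2 work.  The paper shows (Lemma~\ref{lap}) that with these choices $\iota$ intertwines the $\bar\partial$-Laplacian $\Box_{g'}$ on $\bigwedge(\frak a\oplus\n)^{\ast}$ with $\oplus\Box_{g\otimes h_{\beta}}$ on the line-bundle side --- the key identity $\beta_J^{-1}\gamma_L^{-1}=\beta_{J'}\gamma_{L'}$ uses both unitarity and unimodularity of $G$ --- and deduces that $\Phi$ is \emph{injective} on cohomology (Corollary~\ref{Ijn}).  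The Borel spectral sequence computation then gives $E_2\cong H^{\ast,\ast}_{\bar\partial}(\frak a\oplus\n)$ just as in your Stage~2, and injectivity forces degeneration at $E_2$.  Your route instead factors $A^{\ast,\ast}\subset B^{\ast,\ast}\subset\bigoplus_{\beta}A^{\ast,\ast}(G/\Gamma,L_{\beta})$ through fibrewise $N$-invariant forms and argues each inclusion is a quasi-isomorphism directly, bypassing Hodge theory entirely; this is more elementary and avoids Hermitian metrics, while the paper's argument additionally yields explicit harmonic representatives.  Two cautions on your write-up: in Stage~1 you are tacitly switching between $\omega_k$ as a weight vector in $\n^{\ast}$, as its pullback to $G$, and as the left-$G$-invariant form (these differ by $\alpha_k^{\pm1}$), and with your conventions the identity should read $d_{\g}\omega=d_{\frak a\oplus\n}\omega+\lambda_{\omega}\wedge\omega$, so the cancellation works for $\iota(\omega)=\alpha_{\omega}^{-1}\omega$ rather than $\alpha_{\omega}\omega$; and in Stage~2 the ``lands in the $\beta=\chi^{-1}$ summand'' bookkeeping must be checked against whichever sign convention you fix.
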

See Section 3 for the construction of $A^{\ast,\ast}$.
\begin{corollary}\label{GFII}
We have the finite dimensional subDBA $B^{\ast,\ast}=\Phi^{-1}(A^{\ast,\ast}(G/\Gamma))$ of $A^{\ast,\ast}(G/\Gamma)$ such that the inclusion $\Phi:B^{\ast,\ast}\to A^{\ast,\ast}(G/\Gamma)$ induces a cohomology isomorphism
\[H^{\ast,\ast}_{\bar \partial}(B^{\ast,\ast})\cong H^{\ast,\ast}_{\bar \partial}(G/\Gamma).
\]
\end{corollary}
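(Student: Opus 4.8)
The plan is to decompose everything according to the grading by the group $\mathcal{L}$ of line bundles coming from characters of $\C^{n}$ and then to apply Theorem~\ref{MMMTTT} one graded piece at a time. First I would record that, writing $\mathcal{A}:=\bigoplus_{L_\beta\in\mathcal{L}}A^{\ast,\ast}(G/\Gamma,L_\beta)$, the $\mathcal{L}$-grading makes $\mathcal{A}$ a graded DBA: the wedge--tensor product carries the $L_\beta$-summand times the $L_{\beta^{\prime}}$-summand into the $L_{\beta\beta^{\prime}}$-summand, the unit lies in the summand $A^{\ast,\ast}(G/\Gamma,L_0)$ of the trivial bundle $L_0$, and $\bar\partial$ preserves each summand since on it it is just the Dolbeault operator of a fixed line bundle. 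Consequently the $\bar\partial$-cohomology splits, $H^{\ast,\ast}_{\bar\partial}(\mathcal{A})=\bigoplus_{L_\beta\in\mathcal{L}}H^{\ast,\ast}_{\bar\partial}(G/\Gamma,L_\beta)$, and the $L_0$-summand of $\mathcal{A}$ is precisely the ordinary Dolbeault complex $A^{\ast,\ast}(G/\Gamma)$, which is a subDBA of $\mathcal{A}$.

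Next I would check that the subDBA $A^{\ast,\ast}$ produced by Theorem~\ref{MMMTTT} is a \emph{graded} subalgebra of $\mathcal{A}$. This is the step where one has to unwind the construction of $\iota$ in Section~3: using the semisimplicity of the $\phi$-action (Assumption~\ref{Ass}(3)) one works with a basis of $({\frak a}\oplus\n)^{\ast}$ adapted to that action, and the left-invariant forms attached to the basis monomials descend to $G/\Gamma$ only after being twisted by the associated characters, so each such monomial is a section of one definite bundle $L_\beta$. Hence $A^{\ast,\ast}=\bigoplus_{L_\beta\in\mathcal{L}}A^{\ast,\ast}_{L_\beta}$ with $A^{\ast,\ast}_{L_\beta}:=A^{\ast,\ast}\cap A^{\ast,\ast}(G/\Gamma,L_\beta)$, this decomposition is preserved by $\bar\partial$, and the inclusion $\Phi$ is the direct sum of the inclusions $\Phi_{L_\beta}:A^{\ast,\ast}_{L_\beta}\to A^{\ast,\ast}(G/\Gamma,L_\beta)$.

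Finally, Theorem~\ref{MMMTTT} asserts that $\Phi=\bigoplus_{L_\beta}\Phi_{L_\beta}$ induces a cohomology isomorphism; since taking $\bar\partial$-cohomology commutes with direct sums and a direct sum of linear maps is an isomorphism if and only if each summand is, every $\Phi_{L_\beta}$ induces an isomorphism on cohomology, in particular $\Phi_{L_0}$ does. Now $\Phi^{-1}(A^{\ast,\ast}(G/\Gamma))=A^{\ast,\ast}\cap A^{\ast,\ast}(G/\Gamma,L_0)=A^{\ast,\ast}_{L_0}$, so $B^{\ast,\ast}=A^{\ast,\ast}_{L_0}$: it is finite dimensional because it is contained in $A^{\ast,\ast}\cong\bigwedge^{\ast,\ast}({\frak a}\oplus\n)^{\ast}$, it is a subDBA of $A^{\ast,\ast}(G/\Gamma)$ because it is the preimage of the subDBA $A^{\ast,\ast}(G/\Gamma)$ under the DBA morphism $\Phi$, and the desired isomorphism $H^{\ast,\ast}_{\bar\partial}(B^{\ast,\ast})\cong H^{\ast,\ast}_{\bar\partial}(G/\Gamma)$ is exactly the assertion that $\Phi_{L_0}$ induces a cohomology isomorphism. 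The one delicate point, and the main obstacle, is the middle step: that $A^{\ast,\ast}$ respects the $\mathcal{L}$-grading is not automatic for an arbitrary subDBA and must be read off from the explicit description of $\iota$ in Section~3.
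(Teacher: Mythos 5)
Your proposal is correct and takes essentially the same route as the paper: the paper's Corollary \ref{CORR} identifies $B^{\ast,\ast}=\Phi^{-1}(A^{\ast,\ast}(G/\Gamma))$ via Lemma \ref{trii} and leaves the cohomological statement as an immediate consequence of Theorem \ref{MMMTTT}, the implicit mechanism being exactly the $\mathcal{L}$-grading argument you spell out (each monomial generator of $A^{\ast,\ast}$ lies in a single $L_{\beta_J^{-1}\gamma_L^{-1}}$-summand, $\bar\partial$ preserves the summands, and the trivial-bundle component of the quasi-isomorphism $\Phi$ is again a quasi-isomorphism). Your version just makes explicit the bookkeeping the paper takes for granted.
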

See Corollary \ref{CORR} for the construction of $B^{\ast,\ast}$.
\begin{remark}
Let $N$ be a simply connected nilpotent Lie group with a lattice $\Gamma^{\prime\prime}$ and a left-invariant complex structure $J$.
Like Nomizu's theorem (\cite{Nom}) for the de Rham cohomology of nilmanifolds, it is known that an isomorphism $H^{\ast,\ast}_{\bar\partial }(N/\Gamma^{\prime\prime})\cong H^{\ast,\ast}_{\bar\partial}(\n)$ holds if $(N,J,\Gamma^{\prime\prime})$ meet one of the following  conditions:

(N) The complex manifold $N/\Gamma^{\prime\prime}$ has the structure of an iterated principal holomorphic torus bundle (\cite{CF}).

(Q)  $J$  is a small deformation of a rational complex structure i.e. for the rational structure $\n_{\Q}\subset \n$ of the Lie algebra $\n$ induced by a lattice $\Gamma^{\prime\prime}$ (see \cite[Section 2]{R}) we have $J(\n_{\Q})\subset \n_{\Q}$ (\cite{CFI}).

 (C)   $(N,J)$ is a complex  Lie group (\cite{Sak}).\\
\end{remark}

By using Corollary \ref{GFII},  we actually  compute  the Dolbeault cohomology of some examples in Section 5.
Unlike nilmanifolds, we observe that in many cases the Dolbeault cohomology of solvmanifolds can not be completely computed  by using only  Lie algebras.
Moreover we give examples of non-K\"ahler complex solvmanifolds with the Hodge symmetry.

\begin{remark}
If $N$ has  a nilpotent complex structure (see \cite{CF}), then $(\bigwedge^{\ast,\ast}( {\frak a}\oplus\n)^{\ast}, \bar \partial)$ is the minimal model of the DBA $\bigoplus_{L_{\beta}\in {\mathcal L}} A^{\ast,\ast}(G/\Gamma,L_{\beta})$ (see \cite{NT}).
\end{remark}

\section{Holomorphic line bundles over complex tori}
\begin{lemma}\label{twi}
Let $\Gamma$ be a finitely generated free abelian group and $\alpha:\Gamma \to \C^{\ast}$ a character of $\Gamma$.
If the character $\alpha$ is non-trivial, then we have $H^{\ast}(\Gamma, \C_{\alpha})=0$.
\end{lemma}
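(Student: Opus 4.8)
The plan is to reduce to the rank-one case by splitting off a suitable $\Z$-factor and then to invoke a K\"unneth formula. First, since $\alpha$ is non-trivial there is some $g\in\Gamma$ with $\alpha(g)\neq 1$; as $\alpha(0)=1$ we have $g\neq 0$, so we may write $g=me$ with $e\in\Gamma$ primitive and $m\geq 1$, and then $\alpha(e)^{m}=\alpha(g)\neq 1$ forces $\alpha(e)\neq 1$. Since $e$ is primitive it extends to a basis $e=e_{1},e_{2},\dots,e_{k}$ of $\Gamma$; put $\Gamma^{\prime}=\Z e_{2}\oplus\cdots\oplus\Z e_{k}$, so that $\Gamma\cong \Z e_{1}\times\Gamma^{\prime}$ and, as a $\Gamma$-module, $\C_{\alpha}$ is the external tensor product of the $\Z e_{1}$-module $\C_{\alpha_{1}}$ (where $\alpha_{1}=\alpha|_{\Z e_{1}}$) with the $\Gamma^{\prime}$-module $\C_{\alpha^{\prime}}$ ($\alpha^{\prime}=\alpha|_{\Gamma^{\prime}}$).

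Next I would compute $H^{\ast}(\Z,\C_{\alpha_{1}})$ directly from the standard length-one free resolution $0\to \Z[\Z]\xrightarrow{\,t-1\,}\Z[\Z]\to\Z\to 0$ of the trivial module over the group ring $\Z[\Z]$. Applying $\mathrm{Hom}_{\Z[\Z]}(-,\C_{\alpha_{1}})$, the cohomology is that of the two-term complex $\C\xrightarrow{\,\lambda-1\,}\C$ with $\lambda=\alpha(e_{1})\neq 1$; since $\lambda-1$ is an invertible scalar, this complex is acyclic, so $H^{\ast}(\Z,\C_{\alpha_{1}})=0$ in all degrees.

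Finally, I would apply the K\"unneth theorem for group cohomology with coefficients in an external tensor product of modules over the field $\C$ (legitimate here because $\Gamma^{\prime}\cong\Z^{k-1}$ is of finite homological type and the coefficient modules are finite-dimensional): $H^{\ast}(\Gamma,\C_{\alpha})\cong H^{\ast}(\Z,\C_{\alpha_{1}})\otimes_{\C}H^{\ast}(\Gamma^{\prime},\C_{\alpha^{\prime}})$, and the vanishing of the first tensor factor gives $H^{\ast}(\Gamma,\C_{\alpha})=0$. Alternatively one can bypass K\"unneth altogether: $H^{\ast}(\Z^{k},\C_{\alpha})$ is computed by the Koszul complex $\bigwedge^{\bullet}(\C^{k})^{\ast}\otimes\C$ built from the commuting endomorphisms "multiplication by $\alpha(e_{i})-1$", and a Koszul complex one of whose defining scalars is a unit is acyclic. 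The only point requiring care is arranging that the chosen $\Z$-factor genuinely acts non-trivially, which is precisely why one first passes to a primitive element; the remaining steps are routine homological algebra, so I do not expect a serious obstacle here.
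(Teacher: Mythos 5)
Your proof is correct. The skeleton is the same as the paper's (isolate a rank-one direct factor on which $\alpha$ is non-trivial, kill the cohomology there, then propagate the vanishing to all of $\Gamma$), but the tools differ at both stages. For the rank-one case the paper computes $H^{0}(\Z,\C_{\alpha})=0$ from invariants and then gets $H^{1}$ by Poincar\'e duality, whereas you read off all degrees at once from the two-term complex $\C\xrightarrow{\lambda-1}\C$ coming from the resolution $0\to\Z[\Z]\xrightarrow{t-1}\Z[\Z]\to\Z\to 0$; your version is more self-contained and avoids appealing to duality. For the inductive step the paper uses the Hochschild--Serre spectral sequence of $A\subset\Gamma$ with $E_{2}^{p,q}=H^{p}(\Gamma/A,H^{q}(A,\C_{\alpha}))$, while you use a K\"unneth decomposition (or, in your alternative, the Koszul complex for $\C[t_{1}^{\pm1},\dots,t_{k}^{\pm1}]$, which is arguably the cleanest argument of all since a Koszul complex with a unit among its scalars is acyclic in one line). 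You are also more careful than the paper on one small point: the paper asserts without comment that a rank-one direct summand $A$ with $\alpha|_{A}$ non-trivial exists, while you justify it via a primitive element extending to a basis (in fact it is even easier: on any fixed basis of $\Gamma$, some basis vector must have $\alpha\neq 1$, since otherwise $\alpha$ would be trivial). All steps you invoke are standard and valid here, so there is no gap.
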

\begin{proof}
First we assume $\Gamma\cong \Z$.
Then we have 
\[H^{0}(\Z, \C_{\alpha})=\{m\in \C_{\alpha}\vert \alpha(g)m=m,\, {\rm for\,\,all \,\,}g\in \Z \}=0.
\]
Like the  de Rham cohomology of $S^{1}$, by the Poincar\'e duality we have
\[H^{1}(\Z,\C_{\alpha})\cong H^{0}(\Z,\C_{\alpha^{-1}})^{\ast}=0,
\]
and obviously $H^{p}(\Z,\C_{\alpha})=0$ for $p\ge 2$.
Hence the lemma holds in this case.
In  general case, we consider a decomposition $\Gamma=A\oplus B$ such that $A$ is a rank 1 subgroup and the restriction of $\alpha$ on $A$ is also non-trivial.
Then we have the Hochshild-Serre spectral sequence $E_{r}$ such that
\[E_{2}^{p,q}=H^{p}(\Gamma/A, H^{q}(A,\C_{\alpha}))
\]
and this converges to $H^{p+q}(\Gamma, \C_{\alpha})$.
Since $H^{q}(A,\C_{\alpha})=0$ for any $q$, we have $E_{2}=0$ and hence  the lemma follows.
\end{proof}

We consider a complex vector space $\C^{n}$ with a lattice $\Gamma$.
Let  $\alpha:\C^{n}\to \C^{\ast}$ be a $C^{\infty}$-character of $\C^{n}$. 
We have the holomorphic line bundle $L_{\alpha }=(\C^{n}\times \C_{\alpha})/\Gamma $  over the complex torus $\C^{n}/\Gamma$.
We define the equivalence relation  on the space of $C^{\infty}$-characters of $\C^{n}$ such that 
 $\alpha\sim \beta$  if $\alpha\beta^{-1}$ is holomorphic.
\begin{lemma}\label{uniq}
Let  $\alpha:\C^{n}\to \C^{\ast}$ be a  $C^{\infty}$-character of $\C^{n}$.
There exists a unique unitary character $\beta$ such that $\alpha\sim \beta$.
\end{lemma}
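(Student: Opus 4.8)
The plan is to linearize: after identifying the $C^{\infty}$-characters of $\C^{n}$ with $\R$-linear functionals $\C^{n}\to\C$, the statement becomes an elementary assertion about a direct sum decomposition of that space of functionals.

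First I would set up the identification. Since $\C^{n}$ is simply connected and $\exp\colon\C\to\C^{\ast}$ is the universal covering homomorphism with kernel $2\pi i\Z$, every $C^{\infty}$-character $\alpha\colon\C^{n}\to\C^{\ast}$ lifts uniquely to a smooth map $f\colon\C^{n}\to\C$ with $f(0)=0$ and $\exp\circ f=\alpha$. From $\exp(f(z+w))=\exp(f(z)+f(w))$ one gets that $f(z+w)-f(z)-f(w)$ is a continuous $2\pi i\Z$-valued function vanishing at $0$, hence $\equiv 0$; so $f$ is additive and therefore $\R$-linear. Conversely $\exp\circ f$ is a $C^{\infty}$-character for every $f\in\mathrm{Hom}_{\R}(\C^{n},\C)$, and $f\mapsto\exp\circ f$ is injective by uniqueness of lifts. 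Thus $f\mapsto\exp\circ f$ is a bijection from $\mathrm{Hom}_{\R}(\C^{n},\C)$ onto the set of $C^{\infty}$-characters of $\C^{n}$.

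Next I would translate the two relevant properties through this bijection. Writing $\alpha=\exp\circ f$: the character $\alpha$ is holomorphic iff $f$ is holomorphic iff $f$ is $\C$-linear; and $\alpha$ is unitary iff $|\alpha|\equiv 1$ iff $\mathrm{Re}\,f\equiv 0$. Moreover, for $\beta=\exp\circ g$, the character $\alpha\beta^{-1}=\exp\circ(f-g)$ is holomorphic iff $f-g$ is $\C$-linear. Hence Lemma \ref{uniq} is equivalent to: for every $f\in\mathrm{Hom}_{\R}(\C^{n},\C)$ there is a unique $g\in\mathrm{Hom}_{\R}(\C^{n},\C)$ with $\mathrm{Re}\,g\equiv 0$ and $f-g$ $\C$-linear, i.e. $\mathrm{Hom}_{\R}(\C^{n},\C)=V_{\mathrm{hol}}\oplus V_{\mathrm{im}}$, where $V_{\mathrm{hol}}$ is the subspace of $\C$-linear functionals and $V_{\mathrm{im}}$ the subspace of functionals with vanishing real part.

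Finally I would verify this decomposition. The intersection is trivial: if $h$ is $\C$-linear with $\mathrm{Re}\,h\equiv 0$, then $\mathrm{Re}\,h(iz)=\mathrm{Re}(i\,h(z))=-\mathrm{Im}\,h(z)$ must also vanish, so $h\equiv 0$. For surjectivity of the sum one may either count real dimensions, $\dim_{\R}\mathrm{Hom}_{\R}(\C^{n},\C)=4n=2n+2n=\dim_{\R}V_{\mathrm{hol}}+\dim_{\R}V_{\mathrm{im}}$, or write $f=f'+f''$ with $f'$ its $\C$-linear part and $f''$ its conjugate-$\C$-linear part, observe that $z\mapsto\overline{f''(z)}$ is $\C$-linear, and put $g(z)=f''(z)-\overline{f''(z)}$, so that $\mathrm{Re}\,g\equiv 0$ and $f-g=f'+\overline{f''(\cdot)}\in V_{\mathrm{hol}}$; then $\beta=\exp\circ g$ is the required unitary character, with uniqueness coming from $V_{\mathrm{hol}}\cap V_{\mathrm{im}}=0$. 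The only delicate point is the bookkeeping in the first two paragraphs --- correctly matching ``holomorphic'' with ``$\C$-linear'' and ``unitary'' with ``purely imaginary'' at the level of the linearized data --- after which there is no real obstacle.
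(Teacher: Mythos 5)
Your proof is correct and is essentially the paper's argument in coordinate-free form: the paper writes the exponent of $\alpha$ explicitly as $\sum(a_ix_i+b_iy_i+\sqrt{-1}(c_ix_i+d_iy_i))$ and subtracts off an explicit holomorphic character, which is exactly your decomposition of $\mathrm{Hom}_{\R}(\C^{n},\C)$ into $\C$-linear and purely imaginary parts, with uniqueness in both cases coming from the fact that a holomorphic unitary character is trivial. Your first paragraph additionally justifies a step the paper takes for granted (that every $C^{\infty}$-character is $\exp$ of an $\R$-linear functional), which is a welcome bit of extra care but not a different method.
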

\begin{proof}
For a coordinate $(x_{1}+\sqrt{-1}y_{1},\dots ,x_{n}+\sqrt{-1}y_{n})\in \C^{n}$, a character $\alpha$ is written as 
\[\alpha(x_{1}+\sqrt{-1}y_{1},\dots ,x_{n}+\sqrt{-1}y_{n})=\exp (\sum_{i=1}^{n}(a_{i}x_{i}+b_{i}y_{i}+\sqrt{-1}(c_{i}x_{i}+d_{i}y_{i})))\]
for some $a_{i},b_{i},c_{i},d_{i}\in \R^{n}$.
Let $\alpha^{\prime}$ be the holomorphic character defined by
\[\alpha^{\prime}(x_{1}+\sqrt{-1}y_{1},\dots ,x_{n}+\sqrt{-1}y_{n})=\exp(\sum_{i=1}^{n}(-a_{i}(x_{i}+\sqrt{-1}y_{i})+\sqrt{-1}b_{i}(x_{i}+\sqrt{-1}y_{i})).
\]
Then the character $\beta=\alpha\alpha^{\prime}$ is unitary.
If a unitary character is holomorphic, then it is trivial.
Hence such $\beta$ is unique.
\end{proof}

\begin{lemma}{\rm (\cite{Po})}\label{trii}
Let  $\beta:\C^{n}\to \C^{\ast}$ be a unitary $C^{\infty}$-character of $\C^{n}$.
Then the holomorphic line bundle $L_{\beta}$ is trivial if and only if the restriction of $\beta$ on $\Gamma$ is trivial.
\end{lemma}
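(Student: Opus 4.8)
The plan is to unwind the definition of $L_{\beta}$ as the quotient $(\C^{n}\times \C_{\beta})/\Gamma$, in which $\Gamma$ acts by $\gamma\cdot(z,v)=(z+\gamma,\beta(\gamma)v)$, and to reduce triviality of $L_{\beta}$ to a statement about the holomorphic automorphy factor $\gamma\mapsto\beta(\gamma)$. The ``if'' direction is immediate: if the restriction of $\beta$ on $\Gamma$ is trivial, then $\Gamma$ acts trivially on the fibre $\C_{\beta}$, so $L_{\beta}=(\C^{n}/\Gamma)\times \C$ is the trivial holomorphic line bundle.

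For the converse, assume $L_{\beta}$ is holomorphically trivial, so that it admits a nowhere-vanishing holomorphic section; lifting it to $\C^{n}$ gives a holomorphic function $f\colon \C^{n}\to \C^{\ast}$ with $f(z+\gamma)=\beta(\gamma)f(z)$ for all $\gamma\in\Gamma$. Since $\C^{n}$ is simply connected and $f$ has no zeros, I would write $f=\exp(g)$ with $g\colon \C^{n}\to \C$ holomorphic. For each fixed $\gamma$ the holomorphic function $z\mapsto g(z+\gamma)-g(z)$ has constant exponential $\beta(\gamma)$, hence equals a constant $c_{\gamma}$ with $\exp(c_{\gamma})=\beta(\gamma)$. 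Differentiating $g(z+\gamma)=g(z)+c_{\gamma}$ shows that each $\partial g/\partial z_{i}$ is $\Gamma$-periodic, hence descends to a holomorphic function on the compact complex torus $\C^{n}/\Gamma$ and is therefore constant; thus $g(z)=\ell(z)+b$ for a $\C$-linear form $\ell$ and some $b\in\C$, and consequently $\beta(\gamma)=\exp(\ell(\gamma))$ for every $\gamma\in\Gamma$.

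It then remains to see that $\ell$ vanishes on $\Gamma$, and here I would use that $\beta$ is unitary: this gives $\mathrm{Re}\,\ell(\gamma)=0$ for all $\gamma\in\Gamma$, and since $\Gamma$ spans $\C^{n}$ over $\R$ while $\mathrm{Re}\,\ell$ is $\R$-linear, $\mathrm{Re}\,\ell\equiv 0$ on $\C^{n}$; then the $\C$-linear form $\ell$ itself must be zero (by the same computation showing that a unitary holomorphic character of $\C^{n}$ is trivial, used in the proof of Lemma~\ref{uniq}), so $\beta(\gamma)=1$ for all $\gamma\in\Gamma$. The main obstacle is the converse direction, specifically the step forcing $\beta$ to agree on $\Gamma$ with the exponential of a $\C$-linear form: this is where simple connectivity of $\C^{n}$ (to extract a logarithm) and compactness of $\C^{n}/\Gamma$ (to force the periodic derivatives to be constant) come in, the rest being linear algebra. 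Alternatively, one could deduce the lemma from the Appell--Humbert classification of holomorphic line bundles on complex tori, but the argument sketched above is self-contained.
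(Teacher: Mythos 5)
Your argument is correct and complete. The paper itself offers no proof of Lemma~\ref{trii}: the statement is simply quoted from Polishchuk's book \cite{Po}, where it is a special case of the description of holomorphic line bundles on complex tori by (constant) automorphy factors. What you have written is a sound self-contained derivation of exactly the case needed. The ``if'' direction is immediate as you say; for the converse you correctly translate a nowhere-vanishing holomorphic section of $L_{\beta}$ into a holomorphic $f:\C^{n}\to\C^{\ast}$ with $f(z+\gamma)=\beta(\gamma)f(z)$, extract a global logarithm $g$ using simple connectivity, use that $g(z+\gamma)-g(z)$ is a holomorphic function into the discrete set $\log\beta(\gamma)+2\pi\sqrt{-1}\,\Z$ (hence constant), force the derivatives of $g$ to be constant via compactness of $\C^{n}/\Gamma$, and finally use unitarity of $\beta$ together with the fact that $\Gamma$ spans $\C^{n}$ over $\R$ to conclude that the resulting $\C$-linear form vanishes. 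The last step is where the unitarity hypothesis is genuinely needed, and it is good that your write-up isolates it: without unitarity the statement is false, since for a nonzero $\C$-linear form $\ell$ the character $\beta=\exp(\ell)$ gives a holomorphically trivial $L_{\beta}$ (trivialized by $(z,v)\mapsto(z,e^{-\ell(z)}v)$) even though $\beta\vert_{\Gamma}$ need not be trivial. So your proof fills in, correctly, a citation the paper leaves to the literature.
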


\begin{proposition}\label{triv}
Let  $\alpha:\C^{n}\to \C^{\ast}$ be a  $C^{\infty}$-character of $\C^{n}$.
 If $L_{\alpha}$ is a non-trivial holomorphic line bundle, then the Dolbeault cohomology $H^{\ast,\ast}_{\bar\partial}(\C^{n}/\Gamma,L_{\alpha})$ with values in the line bundle $L_{\alpha}$ is trivial. 
\end{proposition}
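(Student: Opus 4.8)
The plan is to reduce to a unitary character by a holomorphic change of trivialization and then to diagonalize the Dolbeault complex by a Fourier expansion adapted to the automorphy factor, getting a family of Koszul complexes that are acyclic precisely because $L_{\alpha}$ is non-trivial. By Lemma \ref{uniq} there is a unitary character $\beta$ with $\alpha\beta^{-1}$ holomorphic. A holomorphic character $\gamma$ of $\C^{n}$ has the form $\gamma=\exp\circ\,\ell$ for a $\C$-linear $\ell\colon\C^{n}\to\C$, so the $\Gamma$-equivariant holomorphic map $(z,v)\mapsto(z,\gamma(z)^{-1}v)$ trivializes $L_{\gamma}$; applying this with $\gamma=\alpha\beta^{-1}$ and using $L_{\alpha}\cong L_{\alpha\beta^{-1}}\otimes L_{\beta}$ yields a holomorphic isomorphism $L_{\alpha}\cong L_{\beta}$, hence $H^{\ast,\ast}_{\bar\partial}(\C^{n}/\Gamma,L_{\alpha})\cong H^{\ast,\ast}_{\bar\partial}(\C^{n}/\Gamma,L_{\beta})$, with $L_{\beta}$ still non-trivial. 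By Lemma \ref{trii}, $\beta|_{\Gamma}$ is then a non-trivial character of $\Gamma$.

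Next I would write $A^{p,q}(\C^{n}/\Gamma,L_{\beta})$ as the space of smooth maps $f\colon\C^{n}\to\bigwedge^{p,q}(\C^{n})^{\ast}$ with $f(z+\lambda)=\beta(\lambda)^{-1}f(z)$ for $\lambda\in\Gamma$, and for $\mu$ in the dual lattice $\Gamma^{\ast}$ (for the standard real pairing on $\C^{n}\cong\R^{2n}$) put $s_{\mu}(z)=\beta(z)^{-1}\exp(2\pi\sqrt{-1}\langle\mu,z\rangle)$, which obeys the same transformation law. Then $\{s_{\mu}f_{\mu}\}$ with $f_{\mu}\in\bigwedge^{\ast,\ast}(\C^{n})^{\ast}$ constant is a complete orthogonal system, and a short computation gives $\bar\partial(s_{\mu}f_{\mu})=s_{\mu}(\omega_{\mu}\wedge f_{\mu})$ with $\omega_{\mu}=\bar\partial\log s_{\mu}$ a translation-invariant $(0,1)$-form depending affinely on $\mu$; thus the Dolbeault complex of $L_{\beta}$ decomposes over $\mu\in\Gamma^{\ast}$ into the finite-dimensional complexes $(\bigwedge^{\ast,\ast}(\C^{n})^{\ast},\,\omega_{\mu}\wedge)$. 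I claim each $\omega_{\mu}\neq0$: otherwise $s_{\mu}$ is holomorphic (its logarithm is a $\bar\partial$-closed function on the simply connected $\C^{n}$), so the character $\beta^{-1}\exp(2\pi\sqrt{-1}\langle\mu,\,\cdot\,\rangle)$ is holomorphic and unitary, hence trivial as in the proof of Lemma \ref{uniq}; then $\beta=\exp(2\pi\sqrt{-1}\langle\mu,\,\cdot\,\rangle)$ on $\C^{n}$, whose restriction to $\Gamma$ is trivial — contradicting the previous paragraph.

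To conclude, give $\C^{n}/\Gamma$ a flat Hermitian metric and $L_{\beta}$ its flat Hermitian metric. Then $\bar\partial$ and $\bar\partial^{\ast}$ preserve the Fourier decomposition, $\omega_{\mu}\wedge$ has adjoint the contraction $\iota_{\omega_{\mu}^{\sharp}}$, and the $\bar\partial$-Laplacian acts on the $\mu$-summand as the scalar $\|\omega_{\mu}\|^{2}>0$; so there is no nonzero $\bar\partial$-harmonic $L_{\beta}$-valued form, and Hodge theory for the Hermitian holomorphic bundle $L_{\beta}$ over the compact complex manifold $\C^{n}/\Gamma$ gives $H^{p,q}_{\bar\partial}(\C^{n}/\Gamma,L_{\beta})=0$ for all $p,q$, hence $H^{\ast,\ast}_{\bar\partial}(\C^{n}/\Gamma,L_{\alpha})=0$. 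Alternatively one can argue directly: if $\bar\partial f=0$ then $\omega_{\mu}\wedge f_{\mu}=0$ for each $\mu$, so $f_{\mu}=\omega_{\mu}\wedge g_{\mu}$ with $g_{\mu}=\|\omega_{\mu}\|^{-2}\iota_{\omega_{\mu}^{\sharp}}f_{\mu}$, and since $\|\omega_{\mu}\|$ grows linearly in $\mu$ while $\|f_{\mu}\|$ is rapidly decreasing, $g=\sum_{\mu}s_{\mu}g_{\mu}$ is a smooth primitive of $f$.

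The main obstacle is the passage from the Fourier-mode computation to a statement about the smooth, infinite-dimensional complex, i.e.\ knowing that its cohomology is the sum of the cohomologies of the Koszul pieces; this is handled by Hodge theory for $\bar\partial$ with line-bundle coefficients together with the observation that the Laplacian is diagonalized by the modes with eigenvalues $\|\omega_{\mu}\|^{2}$, and the one substantive point is the strict positivity of these eigenvalues, which is the character-theoretic claim above and is exactly where the hypothesis that $L_{\alpha}$ is non-trivial enters (via Lemma \ref{trii}).
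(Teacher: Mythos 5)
Your proof is correct, and after the (shared) first step it takes a genuinely different route from the paper's. Both arguments begin identically: replace $\alpha$ by the unitary representative $\beta$ of Lemma \ref{uniq}, note $L_{\alpha}\cong L_{\beta}$, and use Lemma \ref{trii} to convert non-triviality of the bundle into non-triviality of $\beta|_{\Gamma}$. From there the paper stays soft: since $\beta$ is unitary, $L_{\beta}$ is flat with a compatible Hermitian metric, the K\"ahler identities on the flat torus give $H^{\ast}_{\bar\partial}(L_{\beta})\cong H^{\ast}_{D}(L_{\beta})=H^{\ast}(\Gamma,\C_{\beta})$, and the latter vanishes by the group-cohomology computation of Lemma \ref{twi}. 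You instead diagonalize the Dolbeault complex explicitly by the Fourier modes $s_{\mu}$, reduce each mode to a Koszul complex $(\bigwedge^{\ast,\ast},\omega_{\mu}\wedge)$, and show $\omega_{\mu}\neq 0$ by exactly the character-theoretic dichotomy (holomorphic $+$ unitary $\Rightarrow$ trivial) that underlies Lemmas \ref{uniq} and \ref{trii}; the Laplacian then acts as the scalar $\|\omega_{\mu}\|^{2}>0$ on each mode. Your version is more self-contained --- it needs neither Lemma \ref{twi} nor the K\"ahler identity for flat unitary bundles, and it produces an explicit positive lower bound on the spectrum (the $\omega_{\mu}$ form a translated lattice avoiding the origin, so $\inf_{\mu}\|\omega_{\mu}\|>0$, which is what makes your direct construction of a primitive converge) --- at the cost of being longer; the paper's version is shorter but leans on cited machinery. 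One small point worth making explicit if you write this up: the passage from mode-by-mode vanishing to vanishing of the full cohomology does require either the elliptic/Hodge-theoretic argument or the uniform lower bound on $\|\omega_{\mu}\|$, and you correctly supply both.
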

\begin{proof}
Let $\beta$ be the unitary character such that $\alpha\sim \beta$ as in Lemma \ref{uniq}.
Then we have $L_{\alpha}\cong L_{\beta}$.
Let $D$ be the flat connection  on $L_{\beta}$ induced by $\beta$.
We have the decomposition $D=\partial+\bar\partial$ so that $\bar\partial$ is the Dolbeault operator on $L_{\beta}$.
Since $\beta$ is unitary, we have a Hermitian metric on $L_{\beta}$ such that for a K\"ahler metric on $\C^{n}/\Gamma$ we have the standard identity of the Laplacians of $D$ and $\bar\partial$ (see \cite[Section 7]{Gold}).
Hence we have an isomorphism $H^{\ast}_{\bar\partial}(\C^{n}/\Gamma,L_{\beta}) \cong H^{\ast}_{D}(\C^{n}/\Gamma,L_{\beta})$.
If $L_{\beta}$ is non-trivial, then  we have $H^{\ast}_{D}(\C^{n}/\Gamma,L_{\beta})=H^{\ast}(\Gamma,\C_{\beta})=0$ by Lemma \ref{twi}.
Hence the proposition follows.
\end{proof}

\section{The construction of $A^{\ast,\ast}$}
Let $G$ be a Lie group  as in  Assumption \ref{Ass}.
Consider the decomposition $\n_{\C}=\n^{1,0}\oplus \n^{0,1}$.
By the condition (2), this decomposition is a direct sum of $\C^{n}$-modules.
By the condition (3) we have a basis $Y_{1},\dots ,Y_{m}$ of $\n^{1,0}$ such that the action $\phi$ on $\n^{1,0}$ is represented by
$\phi(t)={\rm diag} (\alpha_{1}(t),\dots, \alpha_{m} (t))$.
Since $Y_{j}$ is a left-invariant vector field on $N$,
the vector field $\alpha_{j}Y_{j}$ on $\C^{n}\ltimes _{\phi} N$ is  left-invariant.
Hence we have a basis $X_{1},\dots,X_{n}, \alpha_{1}Y_{1},\dots ,\alpha_{m}Y_{m}$ of $\g^{1,0}$.
Let $x_{1},\dots,x_{n}, \alpha^{-1}_{1}y_{1},\dots ,\alpha_{m}^{-1}y_{m}$ be the  basis of $\bigwedge^{1,0}\g^{\ast}$ which is dual to $X_{1},\dots,X_{n}, \alpha_{1}Y_{1},\dots ,\alpha_{m}Y_{m}$.
Then we have 
\[\bigwedge ^{p,q}\g^{\ast}=\bigwedge ^{p}\langle x_{1},\dots ,x_{n}, \alpha^{-1}_{1}y_{1},\dots ,\alpha^{-1}_{m}y_{m}\rangle\otimes \bigwedge ^{q}\langle \bar x_{1},\dots ,\bar x_{n}, \bar\alpha^{-1}_{1}\bar y_{1},\dots ,\bar\alpha^{-1}_{m}\bar y_{m}\rangle.
\]

Let  $\alpha:\C^{n}\to \C^{\ast}$ be a character of $\C^{n}$.
Let $(A^{\ast,\ast}(G)\otimes \C_{\alpha})^{\Gamma}$ be the space of $\C_{\alpha}$-valued $\Gamma$-invariant differential forms on $G$.
Then we can identify the Dolbeault complex $A^{\ast,\ast}(G/\Gamma,L_{\alpha})$ with $(A^{\ast,\ast}(G)\otimes \C_{\alpha})^{\Gamma}$.
Hence for $\omega\in \bigwedge ^{\ast,\ast}\g^{\ast}$ and $v_{\alpha}\in\C_{\alpha}$, we have \[\omega\otimes (\alpha^{-1}v_{\alpha} )\in A^{\ast,\ast}(G/\Gamma,L_{\alpha}).\]

Let $ {\mathcal L}$ be the set as in Introduction.
By Section 2, we can regard $ {\mathcal L}$ as the set of   isomorphism classes of line bundles over $G/\Gamma$ given by unitary  characters of $\C^{n}$.
We consider the DBA $\bigoplus_{L_{\alpha}\in {\mathcal L}} A^{\ast,\ast}(G/\Gamma,L_{\alpha})$.
We define the DBA $A^{\ast,\ast}$ to prove Theorem \ref{MMMTTT}.
\begin{definition}\label{DeA}
Let  $x_{1},\dots,x_{n}, \alpha^{-1}_{1}y_{1},\dots ,\alpha_{m}^{-1}y_{m}$ be the  basis of $\bigwedge^{1,0}\g^{\ast}$ as above.
By Lemma \ref{uniq}, we have the unitary character $\beta_{j}$ such that $\alpha_{j}\sim\beta_{j}$.
We consider the holomorphic line bundles $L_{\beta^{-1}_{j}}$ over $G/\Gamma$.
 By 
$A^{\ast,\ast}(G/\Gamma,L_{\beta^{-1}_{j}})=(A^{\ast,\ast}(G)\otimes \C_{\beta_{j}^{-1}})^{\Gamma}$,
for $\C_{\beta_{j}^{-1}}\ni v_{\beta_{j}^{-1}}\not=0$ we consider 
 \[ \alpha^{-1}_{j}y_{j}\otimes (\beta_{j}v_{\beta^{-1}_{j}}) \in A^{\ast,\ast}(G/\Gamma,L_{\beta_{j}^{-1}}).\]
Let $A^{\ast,\ast}$ be the subDBA of $\bigoplus_{L_{\alpha}\in {\mathcal L}} A^{\ast,\ast}(G/\Gamma,L_{\alpha})$ defined by
\begin{multline*}A^{p,q}
=\bigwedge ^{p}\langle x_{1},\dots ,x_{n}, \alpha^{-1}_{1} y_{1}\otimes (\beta_{1} v_{\beta_{1}^{-1}}) ,\dots ,\alpha^{-1}_{m}y_{m}\otimes (\beta_{m}v_{\beta_{m}^{-1}})\rangle\\
\bigotimes \bigwedge ^{q}\langle \bar x_{1},\dots ,\bar x_{n}, \bar\alpha^{-1}_{1}\bar y_{1}\otimes(\gamma_{1}v_{\gamma_{1}^{-1}}),\dots ,\bar\alpha^{-1}_{m}\bar y_{m} \otimes(\gamma_{m}v_{\gamma_{m}^{-1}}) \rangle.
\end{multline*}
\end{definition}

\begin{lemma}\label{ISS}

Let $\iota:\bigwedge^{\ast,\ast} ({\frak a}\oplus \n)^{\ast}\to A^{\ast,\ast}$ be the algebra homomorphism defined by
\[\iota(x_{i})=x_{i},\]
\[\iota( \alpha^{-1}_{j}y_{j})=\alpha^{-1}_{j}y_{j}\otimes \beta_{j}v_{\beta^{-1}_{j}}.\]
Then we have a DBA isomorphism
\[\iota: (\bigwedge^{\ast,\ast} ({\frak a}\oplus \n)^{\ast},\bar \partial)\cong (A^{\ast,\ast},\bar\partial).\]
\end{lemma}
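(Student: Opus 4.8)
The plan is to check three things about $\iota$: that it is a well-defined isomorphism of bigraded algebras, that it commutes with the two $\bar\partial$-operators, and — the only substantial point — to carry out the computation of $\bar\partial$ on the generators coming from $\n$.

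\textbf{$\iota$ is an isomorphism of bigraded algebras.} The exterior algebra $\bigwedge^{\ast,\ast}({\frak a}\oplus\n)^{\ast}$, taken with the product complex structure, is the free bigraded-commutative algebra on its $(1,0)$-part $\langle x_{1},\dots,x_{n},y_{1},\dots,y_{m}\rangle$ (the basis vector dual to $Y_{j}$ in ${\frak a}\oplus\n$ being written $\alpha^{-1}_{j}y_{j}$ in the statement) and its $(0,1)$-part. By Definition~\ref{DeA}, $A^{\ast,\ast}$ is by construction the bigraded subalgebra of $\bigoplus_{L_{\beta}\in{\mathcal L}}A^{\ast,\ast}(G/\Gamma,L_{\beta})$ generated by $x_{i}$, $\alpha^{-1}_{j}y_{j}\otimes(\beta_{j}v_{\beta_{j}^{-1}})$ and their $(0,1)$-analogues. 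Since these generators are (twisted) left-invariant $1$-forms, their products satisfy exactly the bigraded-commutativity relations, so $\iota$ extends uniquely to an algebra homomorphism respecting bidegree; it is surjective by construction, and injective because grouping monomials by the line-bundle summand $A^{\ast,\ast}(G/\Gamma,L_{\beta})$ into which they are sent reduces linear independence of their images to that of the left-invariant forms $x_{i},\alpha^{-1}_{j}y_{j},\bar x_{i},\bar\alpha^{-1}_{j}\bar y_{j}$ in $\bigwedge^{\ast,\ast}\g^{\ast}$.

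\textbf{Reduction to generators.} The operator $\bar\partial$ on $\bigoplus_{L_{\beta}}A^{\ast,\ast}(G/\Gamma,L_{\beta})$ is a derivation of bidegree $(0,1)$ (on each $L_{\beta}\otimes L_{\beta'}$ the Dolbeault operator obeys the Leibniz rule), and so is $\bar\partial$ on $\bigwedge^{\ast,\ast}({\frak a}\oplus\n)^{\ast}$; as $\iota$ is an algebra isomorphism it suffices to verify $\iota\circ\bar\partial=\bar\partial\circ\iota$ on the generators $x_{i},\bar x_{i},\alpha^{-1}_{j}y_{j},\bar\alpha^{-1}_{j}\bar y_{j}$. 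Because $[\g,\g]\subset\n$ we have $dx_{i}=0$ in $\g$, hence $\bar\partial(\iota(x_{i}))=0$, and $dx_{i}=0$ in ${\frak a}\oplus\n$ as well, so the identity holds on $x_{i}$ and $\bar x_{i}$. It remains to treat $\alpha^{-1}_{j}y_{j}$; the generator $\bar\alpha^{-1}_{j}\bar y_{j}$ is entirely parallel, with $\beta_{j}$ replaced by the unitary character $\gamma_{j}$ ($\bar\alpha_{j}\sim\gamma_{j}$, by Lemma~\ref{uniq}) and holomorphic/antiholomorphic roles interchanged.

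\textbf{The generator $\alpha^{-1}_{j}y_{j}$ (main step).} Under the identification of Section~3, $A^{\ast,\ast}(G/\Gamma,L_{\beta})=(A^{\ast,\ast}(G)\otimes\C_{\beta})^{\Gamma}$ with Dolbeault operator $\bar\partial_{G}\otimes\mathrm{id}$, $\bar\partial_{G}$ being the Dolbeault operator of $(G,J)$. Writing $\eta_{j}=\alpha^{-1}_{j}y_{j}$, $\bar\eta_{l}=\bar\alpha^{-1}_{l}\bar y_{l}$ for the left-invariant forms, Leibniz gives $\bar\partial(\iota(\alpha^{-1}_{j}y_{j}))=\bigl((\bar\partial_{G}\beta_{j})\wedge\eta_{j}+\beta_{j}\,\bar\partial_{G}\eta_{j}\bigr)\otimes v_{\beta_{j}^{-1}}$. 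Computing $\bar\partial_{G}\eta_{j}$ from the Chevalley--Eilenberg equations of $\g$: since ${\frak a}$ is abelian and acts on $\n$, the $(1,1)$-part of $d\eta_{j}$ splits as a \emph{mixed} contribution $-(\bar\partial_{G}\log\alpha_{j})\wedge\eta_{j}$, produced by the antiholomorphic part of the $\C^{n}$-action on $Y_{j}$, plus an $\n$-intrinsic contribution $-\sum_{k,l}c^{\,j}_{k\bar l}\,\eta_{k}\wedge\bar\eta_{l}$ coming from the brackets $[Y_{k},\bar Y_{l}]$ in $\n$, whose structure constants $c^{\,j}_{k\bar l}$ vanish unless $\alpha_{k}\bar\alpha_{l}=\alpha_{j}$ by equivariance of the bracket under the weight decomposition of $\n$. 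As $\beta_{j}$ is the unique unitary character with $\alpha_{j}\beta_{j}^{-1}$ holomorphic (Lemma~\ref{uniq}), one has $\bar\partial_{G}\log\beta_{j}=\bar\partial_{G}\log\alpha_{j}$, so $(\bar\partial_{G}\beta_{j})\wedge\eta_{j}=\beta_{j}(\bar\partial_{G}\log\alpha_{j})\wedge\eta_{j}$ cancels the mixed contribution, leaving $\bar\partial(\iota(\alpha^{-1}_{j}y_{j}))=-\sum_{k,l}c^{\,j}_{k\bar l}\,\beta_{j}(\eta_{k}\wedge\bar\eta_{l})\otimes v_{\beta_{j}^{-1}}$. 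On the Lie-algebra side, $\bar\partial_{{\frak a}\oplus\n}y_{j}$ is the $(1,1)$-part of the Chevalley--Eilenberg differential of the \emph{direct sum} ${\frak a}\oplus\n$, which has no mixed contribution, so $\bar\partial_{{\frak a}\oplus\n}y_{j}=-\sum_{k,l}c^{\,j}_{k\bar l}\,y_{k}\wedge\bar y_{l}$ with the \emph{same} constants, and $\iota(\bar\partial_{{\frak a}\oplus\n}y_{j})=-\sum_{k,l}c^{\,j}_{k\bar l}(\beta_{k}\gamma_{l})(\eta_{k}\wedge\bar\eta_{l})\otimes(v_{\beta_{k}^{-1}}\otimes v_{\gamma_{l}^{-1}})$. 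These agree term by term: if $c^{\,j}_{k\bar l}\neq0$ then $\alpha_{k}\bar\alpha_{l}=\alpha_{j}$, hence $\beta_{k}\gamma_{l}\sim\alpha_{k}\bar\alpha_{l}=\alpha_{j}\sim\beta_{j}$ with $\beta_{k}\gamma_{l}$ and $\beta_{j}$ both unitary, so $\beta_{k}\gamma_{l}=\beta_{j}$ by uniqueness in Lemma~\ref{uniq}, and under the induced identification $\C_{\beta_{k}^{-1}}\otimes\C_{\gamma_{l}^{-1}}=\C_{\beta_{j}^{-1}}$ (choosing the generators $v$ of the fibres compatibly with tensor products) the two sides coincide. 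This gives $\iota\bar\partial=\bar\partial\iota$ on $\alpha^{-1}_{j}y_{j}$, and the lemma follows.

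\textbf{Main obstacle.} The substance is the last step: extracting the $(1,1)$-part of the Maurer--Cartan equations of $\g$, isolating the mixed $\bar x_{i}\wedge\eta_{j}$ terms, and recognising that these are exactly compensated by $\bar\partial_{G}\beta_{j}$ — which is the whole reason the twisting characters $\beta_{j}$ are introduced in Definition~\ref{DeA} — together with the character bookkeeping $\alpha_{k}\bar\alpha_{l}=\alpha_{j}\Rightarrow\beta_{k}\gamma_{l}=\beta_{j}$ that places the surviving terms in the correct summand of $\bigoplus_{L_{\beta}}A^{\ast,\ast}(G/\Gamma,L_{\beta})$. Everything else is formal.
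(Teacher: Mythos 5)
Your proof is correct and follows the same route as the paper: the whole argument rests on the observation that $\alpha_{j}^{-1}\beta_{j}$ is holomorphic, so that $\bar\partial$ passes through the twisting factor and the mixed $\bar x_{i}\wedge\eta_{j}$ terms cancel. The paper's proof is essentially one line and leaves implicit both the reduction to generators and the character bookkeeping $\alpha_{k}\bar\alpha_{l}=\alpha_{j}\Rightarrow\beta_{k}\gamma_{l}=\beta_{j}$ needed to land in the correct summand of $\bigoplus_{L_{\beta}\in{\mathcal L}}A^{\ast,\ast}(G/\Gamma,L_{\beta})$; you supply these details correctly.
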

\begin{proof}
Since $\alpha^{-1}_{j}\beta_{j}$ is holomorphic, we have
\[\bar\partial(\alpha^{-1}_{j}y_{j}\otimes \beta_{j}v_{\beta^{-1}_{j}})=\alpha^{-1}_{j}(\bar\partial y_{j})\otimes \beta_{j}v_{\beta^{-1}_{j}}.
\]
This implies $\bar\partial \circ \iota=\iota \circ  \bar\partial$.
Hence the lemma follows.
\end{proof}

Let $g$ be the left-invariant  Hermitian metric on $G$ defined by 
\[g=x_{1}\bar x_{1}+\dots +x_{n}\bar x_{n}+\alpha_{1}^{-1}\bar\alpha_{1}^{-1}y_{1}\bar y_{1}+\dots ++\alpha_{m}^{-1}\bar\alpha_{m}^{-1}y_{m}\bar y_{m}.\]
Let  $\beta:\C^{n}\to \C^{\ast}$ be a unitary $C^{\infty}$-character of $\C^{n}$.
Take $\C_{\beta}\ni v_{\beta}\not=0$.
Then $\beta^{-1}v_{\beta}$ is a $C^{\infty}$-frame of the line bundle $L_{\beta}=(G\times\C_{\beta})/\Gamma$.
We define the Hermitian metric $h_{\beta}$ on $L_{\beta}$ such that $h_{\beta}(\beta^{-1}v_{\beta},\beta^{-1}v_{\beta})=1$.
Let $\bar\ast_{g\otimes h_{\beta}}:A^{p,q}(G/\Gamma,L_{\beta})\to A^{n+m-p,n+m-q}(G/\Gamma,L^{\ast}_{\beta})$ be the $\C$-anti-linear Hodge star operator of $g\otimes h_{\beta}$ on $A^{\ast,\ast}(G/\Gamma,L_{\beta})$ and let 
\[\bar\delta=\bar\ast_{g\otimes h_{\beta}}\circ \bar\partial \circ \bar \ast_{g\otimes h_{\beta}}, \,\,\,\,\,
 \Box_{g\otimes h_{\beta}}=\bar\partial \bar\delta+\bar\delta\bar\partial\]
 and 
\[{\mathcal H}^{p,q}(G/\Gamma, L_{\beta})=\{\omega\in A^{\ast,\ast}(G/\Gamma,L_{\beta})\vert \Box_{g\otimes h_{\beta}}\omega=0\}.\] 
We consider the $\bar\partial $-Laplace operator $\oplus\Box_{g\otimes h_{\beta}}$ on the direct sum $\bigoplus_{L_{\beta}\in {\mathcal L}} A^{\ast,\ast}(G/\Gamma,L_{\beta})$.

We consider the basis $x_{1},\dots,x_{n},y_{1},\dots, y_{m}$ of $\bigwedge^{1,0}({\frak a}\oplus\n)^{\ast}$.
Let $g^{\prime}$ be the Hermitian metric on ${\frak a}\oplus\n$ defined by
 \[g^{\prime}=x_{1}\bar x_{1}+\dots +x_{n}\bar x_{n}+y_{1}\bar y_{1}+\dots +y_{m}\bar y_{m}.\]
Let $\bar\ast_{g^{\prime}}:\bigwedge ^{p,q}({\frak a}\oplus \n)^{\ast}\to \bigwedge ^{n+m-p,n+m-q}({\frak a}\oplus \n)^{\ast}$ be the $\C$-anti-linear  Hodge star operator of $g^{\prime}$ on $\bigwedge ^{\ast,\ast}({\frak a}\oplus \n)^{\ast}$ and let 
\[\bar\delta=\bar\ast_{g^{\prime}}\circ \bar\partial \circ \bar \ast_{g^{\prime}},\,\,\,\,\, 
 \Box_{g^{\prime}}=\bar\partial \bar\delta+\bar\delta\bar\partial \]
 and 
\[{\mathcal H}^{p,q}({\frak a}\oplus \n)=\{\omega\in\bigwedge ^{\ast,\ast}({\frak a}\oplus \n)^{\ast}\vert \Box_{g^{\prime}}\omega=0\}.\]

\begin{lemma}\label{lap}
We consider the isomorphism $\iota: \bigwedge ^{\ast,\ast}({\frak a}\oplus \n)^{\ast} \cong A^{\ast,\ast}$ as in  Lemma \ref{ISS}.
Then we have 
\[\iota \circ \Box_{g^{\prime}}=(\oplus\Box_{g\otimes h_{\alpha}})\circ \iota.\]
\end{lemma}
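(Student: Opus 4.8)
The plan is to verify the intertwining identity $\iota\circ\Box_{g'}=(\oplus\Box_{g\otimes h_\alpha})\circ\iota$ by reducing it to the corresponding identity for the Hodge star operators, namely $\Phi\circ\bar\ast_{g'}=\bar\ast_{g\otimes h_{\beta^{-1}}}\circ\iota$ on each summand, and then combining with the already-established fact (Lemma \ref{ISS}) that $\iota$ commutes with $\bar\partial$. Indeed, once we know $\iota$ intertwines both $\bar\partial$ and $\bar\ast$, it automatically intertwines $\bar\delta=\bar\ast\circ\bar\partial\circ\bar\ast$ and hence $\Box=\bar\partial\bar\delta+\bar\delta\bar\partial$. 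So the whole lemma comes down to a statement about Hodge stars, and that is a pointwise linear-algebra statement once we understand how the metrics match up under $\iota$.

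First I would pin down the metrics. On the source side, $g'$ is the standard metric for which $x_1,\dots,x_n,y_1,\dots,y_m$ is a unitary coframe; the induced metric on $\bigwedge^{p,q}(\mathfrak a\oplus\mathfrak n)^\ast$ makes the obvious wedge monomials in the $x_i,\bar x_i,y_j,\bar y_j$ orthonormal. On the target side, $g$ is the left-invariant metric for which $x_1,\dots,x_n,\alpha_1^{-1}y_1,\dots,\alpha_m^{-1}y_m$ is a unitary coframe (note $\alpha_j^{-1}\bar\alpha_j^{-1}y_j\bar y_j$ is exactly the $|\alpha_j^{-1}y_j|^2$ term), and $h_{\beta_j^{-1}}$ is chosen so that the frame $\beta_j v_{\beta_j^{-1}}$ has norm $1$. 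Under $\iota$ the generator $\alpha_j^{-1}y_j\in\bigwedge^{1,0}(\mathfrak a\oplus\mathfrak n)^\ast$ — wait, more precisely $\iota(\alpha_j^{-1}y_j)=\alpha_j^{-1}y_j\otimes\beta_j v_{\beta_j^{-1}}$ — has, in the metric $g\otimes h_{\beta_j^{-1}}$, squared norm equal to $|\alpha_j^{-1}y_j|_g^2\cdot|\beta_j v_{\beta_j^{-1}}|_{h}^2 = 1\cdot 1 = 1$. So $\iota$ carries the unitary coframe of $g'$ to a unitary coframe of $g\otimes h$ (for the appropriate line bundle in each factor), and likewise $\iota(x_i)=x_i$ already has norm $1$ on both sides. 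Thus $\iota$ is a fiberwise isometry onto its image, degree by degree and bidegree by bidegree, once one identifies $\bigwedge^{p,q}$ of the line-bundle-twisted forms appearing in the direct sum with the exterior algebra on the twisted generators.

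With that isometry in hand, the compatibility of Hodge stars is formal: the Hodge star $\bar\ast$ of a Hermitian metric is determined by the metric together with the (fixed) volume form, and both volume forms are the top wedge of the respective unitary coframes, which again correspond under $\iota$ up to the unit scalar coming from the line-bundle frame. Hence $\iota$ (extended to an isomorphism with the conjugate-dual-twisted target, matching $L_\beta$ with $L_\beta^\ast$ via $v_\beta\mapsto v_\beta^\ast$) intertwines $\bar\ast_{g'}$ with $\bar\ast_{g\otimes h}$. One point that needs a remark here is that the Hodge star on the twisted complex maps into forms valued in the \emph{dual} line bundle, and the operators $\bar\delta$, $\Box$ are defined using $\bar\ast$ twice so as to land back in $L_\beta$-valued forms; the same bookkeeping happens on the Lie-algebra side (where the "line bundle" is trivial), and these match because $\beta_j\beta_j^{-1}$ and $\alpha_j^{-1}\alpha_j$ cancel. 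Combining $\bar\partial\circ\iota=\iota\circ\bar\partial$ (Lemma \ref{ISS}) with $\bar\ast_{g\otimes h}\circ\iota=\iota\circ\bar\ast_{g'}$ gives $\bar\delta\circ\iota=\iota\circ\bar\delta$ and therefore $\Box_{g\otimes h}\circ\iota=\iota\circ\Box_{g'}$, which is the claim.

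The main obstacle I expect is purely bookkeeping: making precise the identification under which "$\bigwedge^{p,q}$ of the twisted generators inside $\bigoplus_{L_\beta}A^{\ast,\ast}(G/\Gamma,L_\beta)$" is a sub-DBA that $\iota$ maps onto isometrically, and checking that the product structure (wedge $\otimes$ tensor) is compatible with the metrics so that the Hodge-star identity holds on \emph{all} of $A^{p,q}$, not just on monomials — in particular that cross terms like $x_i\wedge\overline{\alpha_j^{-1}y_j}\otimes\gamma_j v_{\gamma_j^{-1}}$ still have the expected norm, which they do precisely because the chosen coframes are unitary and the line-bundle frames have unit norm. Once the coframe-level matching is set up cleanly, the Hodge-star and Laplacian identities follow by the standard formal argument with no further computation.
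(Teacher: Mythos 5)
Your overall strategy coincides with the paper's: reduce the Laplacian identity to the Hodge star identity $\iota\circ\bar\ast_{g^{\prime}}=(\oplus\bar\ast_{g\otimes h})\circ\iota$, using that $\iota$ carries the unitary coframe of $g^{\prime}$ to a unitary coframe of $g\otimes h$. But there is a genuine gap in the step you dismiss as bookkeeping. Compute $\bar\ast$ on a monomial: on the Lie-algebra side, $\bar\ast_{g^{\prime}}(x_{I}\wedge y_{J}\wedge\bar x_{K}\wedge\bar y_{L})=\epsilon\, x_{I^{\prime}}\wedge y_{J^{\prime}}\wedge\bar x_{K^{\prime}}\wedge\bar y_{L^{\prime}}$ with \emph{complementary} multi-indices. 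On the bundle side, $\iota$ of the left-hand monomial lies in the summand attached to the character $\beta_{J}^{-1}\gamma_{L}^{-1}$, so its Hodge star lies in the summand attached to $\beta_{J}\gamma_{L}$; whereas $\iota$ of the right-hand side lies in the summand attached to $\beta_{J^{\prime}}^{-1}\gamma_{L^{\prime}}^{-1}$. For the two to be equal inside $\bigoplus_{L_{\beta}\in\mathcal L}A^{\ast,\ast}(G/\Gamma,L_{\beta})$ you need the identity of characters $\beta_{J}^{-1}\gamma_{L}^{-1}=\beta_{J^{\prime}}\gamma_{L^{\prime}}$, equivalently $\prod_{j=1}^{m}\beta_{j}\gamma_{j}=1$. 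This does not follow from ``$\beta_{j}\beta_{j}^{-1}$ and $\alpha_{j}^{-1}\alpha_{j}$ cancel'': the indices $J$ and $J^{\prime}$ are complementary, not equal, so no termwise cancellation is available; your parenthetical only addresses the double application of $\bar\ast$ within a single fixed summand.

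Moreover the needed identity is not formal: it is equivalent to $|\det(\phi(t)|_{\n^{1,0}})|=1$ for all $t$, i.e.\ to the unimodularity of $G$. The paper gets this from the hypothesis that $G$ admits a lattice (a Lie group with a lattice is unimodular, hence $\alpha_{J}\bar\alpha_{L}\alpha_{J^{\prime}}\bar\alpha_{L^{\prime}}=1$, and passing to unitary parts gives $\beta_{J}^{-1}\gamma_{L}^{-1}=\beta_{J^{\prime}}\gamma_{L^{\prime}}$). Without this input the image $A^{\ast,\ast}$ of $\iota$ need not be stable under $\bar\ast$, hence not under $\bar\delta$ or $\Box$, and the lemma would fail. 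So your isometry observation correctly produces the sign $\epsilon$ and the form part of the answer, but the one substantive point of the proof --- the place where the existence of $\Gamma$ is actually used --- is missing.
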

\begin{proof}
Let $\oplus \bar\ast_{g\otimes h_{\alpha}}$ be  the Hodge star operator on $\bigoplus_{L_{\beta}\in {\mathcal L}} A^{\ast,\ast}(G/\Gamma,L_{\beta})$.
It is sufficient to show 
\[\iota \circ \bar \ast_{g^{\prime}}=(\oplus \bar\ast_{g\otimes h_{\alpha}})\circ \iota.\]
For a multi-index $I=\{i_{1},\dots ,i_{r}\}$, we write $x_{I}=x_{i_{1}}\wedge \dots \wedge x_{i_{r}}$, $y_{I}=y_{i_{1}}\wedge \dots \wedge y_{i_{r}}$, $\alpha_{I}=\alpha_{i_{1}} \cdots  \alpha_{i_{r}}$ and $\beta_{I}=\beta_{i_{1}} \cdots  \beta_{i_{r}}$.
For multi-indices $I,K \subset \{1,\dots,n\}$ and $J,L\subset \{1,\dots,m\}$,  we have 
\[\bar\ast_{g^{\prime}}(x_{I}\wedge y_{J}\wedge \bar x_{K}\wedge \bar y_{L})=\epsilon x_{I^{\prime}}\wedge y_{J^{\prime}}\wedge \bar x_{K^{\prime}}\wedge \bar y_{L^{\prime}}
\]
where $I^{\prime}$, $J^{\prime}$, $K^{\prime}$ and $L^{\prime}$ are complements  and $\epsilon$ is the sign of a permutation.
We also have 
\begin{multline*}
\oplus \bar\ast_{g\otimes h_{\alpha}}(x_{I}\wedge \alpha^{-1}_{J}y_{J}\wedge \bar x_{K}\wedge \bar \alpha^{-1}_{L}\bar y_{L}\otimes \beta_{J}\gamma_{L}v_{\beta^{-1}_{J}\gamma^{-1}_{L}})\\
=\epsilon x_{I^{\prime}}\wedge \alpha^{-1}_{J^{\prime}}y_{J^{\prime}}\wedge \bar x_{K^{\prime}}\wedge \bar \alpha^{-1}_{L^{\prime}}\bar y_{L^{\prime}}\otimes \beta^{-1}_{J}\gamma^{-1}_{L}v_{\beta_{J}\gamma_{L}}.
\end{multline*}
Hence we only need to show 
\[\beta^{-1}_{J}\gamma^{-1}_{L}=\beta_{J^{\prime}}\gamma_{L^{\prime}}.
\]
Since a Lie group with a lattice  is  unimodular (see \cite[Remark 1.9]{R}), the action $\phi$ on $\n$ is represented by  unimodular matrices.
Hence we have $\alpha_{J}\bar\alpha_{L}\alpha_{J^{\prime}}\bar\alpha_{L^{\prime}}=1$.
 This implies $ \beta^{-1}_{J}\gamma^{-1}_{L}=\beta_{J^{\prime}}\gamma_{L^{\prime}}$.
Hence the lemma follows.
\end{proof}
\begin{corollary}\label{Ijn}
The inclusion 
\[\Phi:A^{\ast,\ast}\to \bigoplus_{L_{\beta}\in {\mathcal L}} A^{\ast,\ast}(G/\Gamma,L_{\beta})\] induces an  injection 
\[H^{p,q}_{\bar\partial}( {\frak a}\oplus\n)\cong H^{p,q}( A^{\ast,\ast})\to H^{p,q}_{\bar \partial}(\bigoplus_{L_{\beta}\in {\mathcal L}} A^{\ast,\ast}(G/\Gamma,L_{\beta})).\]
\end{corollary}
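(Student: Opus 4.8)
The plan is to run a Hodge-theoretic comparison between the finite-dimensional complex $\bigwedge^{\ast,\ast}({\frak a}\oplus \n)^{\ast}$ and its image under $\iota$, exploiting that $\iota$ intertwines the two Laplacians (Lemma \ref{lap}) together with the ordinary Hodge theory available on the target.

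First I would record the Hodge-theoretic input on the target. Since $\Gamma$ is a lattice in the solvable Lie group $G$, the solvmanifold $G/\Gamma$ is compact, and for each unitary character $\beta$ of $\C^{n}$ the bundle $L_{\beta}\to G/\Gamma$ is a holomorphic line bundle carrying the Hermitian metric $h_{\beta}$, while $g$ is a Hermitian metric on $G/\Gamma$ (both facts are already used in setting up Lemma \ref{lap}). Hence on each summand $A^{\ast,\ast}(G/\Gamma,L_{\beta})$ the operator $\Box_{g\otimes h_{\beta}}$ is a self-adjoint elliptic operator on a compact complex manifold, so the usual Hodge decomposition holds; in particular every $\bar\partial$-class has a $\Box_{g\otimes h_{\beta}}$-harmonic representative, and a $\Box_{g\otimes h_{\beta}}$-harmonic form that is $\bar\partial$-exact is zero. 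Since $\bar\partial$, $\bar\delta$ and $\oplus\Box_{g\otimes h_{\alpha}}$ all act componentwise on $\bigoplus_{L_{\beta}\in{\mathcal L}}A^{\ast,\ast}(G/\Gamma,L_{\beta})$ and every element of this direct sum has only finitely many nonzero components, harmonicity and $\bar\partial$-exactness are checked componentwise, and the same conclusion holds there: a $\oplus\Box_{g\otimes h_{\alpha}}$-harmonic element which is $\bar\partial$-exact is zero.

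Next I would invoke elementary finite-dimensional Hodge theory on the source. The complex $(\bigwedge^{\ast,\ast}({\frak a}\oplus \n)^{\ast},\bar\partial)$ is a finite-dimensional cochain complex equipped with the inner product induced by $g^{\prime}$, so purely linear-algebraically $H^{p,q}_{\bar\partial}({\frak a}\oplus \n)\cong {\mathcal H}^{p,q}({\frak a}\oplus \n)$ and every $\bar\partial$-class has a representative in ${\mathcal H}^{p,q}({\frak a}\oplus \n)$. Now let $c\in A^{p,q}$ be $\bar\partial$-closed with $\Phi(c)$ exact in the direct sum. By Lemma \ref{ISS} write $\iota^{-1}(c)=\eta+\bar\partial u$ with $\eta\in {\mathcal H}^{p,q}({\frak a}\oplus \n)$, so $c=\iota(\eta)+\bar\partial(\iota(u))$. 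By Lemma \ref{lap}, $(\oplus\Box_{g\otimes h_{\alpha}})(\iota(\eta))=\iota(\Box_{g^{\prime}}\eta)=0$, so $\Phi(\iota(\eta))$ is $\oplus\Box_{g\otimes h_{\alpha}}$-harmonic; and since $\Phi(\iota(\eta))=\Phi(c)-\bar\partial(\Phi(\iota(u)))$ is $\bar\partial$-exact, the previous paragraph forces $\Phi(\iota(\eta))=0$. As $\Phi$ and $\iota$ are injective, $\eta=0$, hence $c=\bar\partial(\iota(u))$ and $[c]=0$ in $H^{p,q}(A^{\ast,\ast})$. Combined with the isomorphism $H^{p,q}_{\bar\partial}({\frak a}\oplus \n)\cong H^{p,q}(A^{\ast,\ast})$ of Lemma \ref{ISS}, this is exactly the asserted injectivity.

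The only point needing care is the first paragraph: one must be sure that Hodge theory is legitimately applicable, i.e.\ that $g$ genuinely descends to the compact manifold $G/\Gamma$, that each $L_{\beta}$ with $\beta$ unitary is a bona fide Hermitian holomorphic line bundle over $G/\Gamma$, and that the passage to the possibly infinite direct sum over ${\mathcal L}$ introduces nothing new — which it does not, since only finitely many components are ever nonzero and harmonicity and $\bar\partial$-exactness are componentwise conditions. Granting that, the argument is just the principle ``harmonic $+$ exact $=0$'' transported through Lemmas \ref{ISS} and \ref{lap}.
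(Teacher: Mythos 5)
Your argument is correct and is essentially the paper's own proof: both rely on the Hodge isomorphisms ${\mathcal H}^{p,q}({\frak a}\oplus\n)\cong H^{p,q}_{\bar\partial}({\frak a}\oplus\n)$ and ${\mathcal H}^{p,q}(G/\Gamma,L_{\beta})\cong H^{p,q}_{\bar\partial}(G/\Gamma,L_{\beta})$, use Lemma \ref{lap} to send harmonic representatives to harmonic forms, and conclude by ``harmonic and $\bar\partial$-exact implies zero.'' You merely spell out the componentwise reduction over the direct sum, which the paper leaves implicit.
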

\begin{proof}
We have  isomorphisms ${\mathcal H}^{p,q}(G/\Gamma, L_{\beta})\cong H^{p,q}_{\bar\partial}(G/\Gamma,L_{\beta})$ and ${\mathcal H}^{p,q}({\frak a}\oplus \n)\cong H^{p,q}_{\bar\partial}({\frak a}\oplus \n)$ (see \cite{RO}).
By Lemma \ref{lap}, we have 
\[\iota({\mathcal H}^{p,q}({\frak a}\oplus \n))\subset \bigoplus_{L_{\beta}\in {\mathcal L}}{\mathcal H}^{p,q}(G/\Gamma, L_{\beta}).\]
Hence the corollary follows.
\end{proof}

\section{Proof of the main theorem}\label{MTT}

\begin{proposition}\label{FIIB}
Let $G$ be a Lie group as in Assumption \ref{Ass}.
$G/\Gamma$ is a holomorphic fiber bundle over a torus  with a nilmanifold as a  fiber,
\[N/\Gamma^{\prime\prime}\to G/\Gamma\to \C^{n}/\Gamma^{\prime}\]
  such that the structure group of this fibration is discrete. 
\end{proposition}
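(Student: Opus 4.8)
The plan is to realise the bundle projection $G/\Gamma\to\C^{n}/\Gamma^{\prime}$ as the map $\bar p$ induced by the first-factor projection $\pi\colon G=\C^{n}\ltimes_{\phi}N\to\C^{n}$, to identify the fibre with $N/\Gamma^{\prime\prime}$, to trivialise the resulting bundle locally over the base torus, and to read off the transition functions from the deck group of the covering $\C^{n}\to\C^{n}/\Gamma^{\prime}$.

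First I would record the properties of $\pi$. It is a surjective homomorphism of Lie groups with kernel $N$, and by the description of $\bigwedge^{1,0}\g^{\ast}$ in Section 3 --- spanned by the $x_{i}$, which are pulled back from $\C^{n}$, and by the $\alpha^{-1}_{j}y_{j}$, which annihilate the $\C^{n}$-directions --- the differential of $\pi$ carries $\g^{1,0}$ onto $\frak a^{1,0}$; hence $\pi$ is holomorphic, and likewise the inclusion $N\hookrightarrow G$ is holomorphic and induces on $N$ precisely the structure $J$. By condition (4) one has $\Gamma=\Gamma^{\prime}\ltimes_{\phi}\Gamma^{\prime\prime}$ with $\pi(\Gamma)=\Gamma^{\prime}$ and $\Gamma\cap N=\Gamma^{\prime\prime}$, and since $\Gamma^{\prime}$ is a lattice in $\C^{n}$ the map $\pi$ descends to a holomorphic map $\bar p\colon G/\Gamma\to\C^{n}/\Gamma^{\prime}$. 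Its fibre over the base point is the image of $N$, that is $N/\Gamma^{\prime\prime}$, a complex nilmanifold; a general fibre is $N/\phi(t)(\Gamma^{\prime\prime})$, which is biholomorphic to $N/\Gamma^{\prime\prime}$ via the holomorphic automorphism $\phi(t)$ of condition (2).

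Next I would prove local triviality. Choose a connected open $V\subset\C^{n}/\Gamma^{\prime}$ over which the covering $\C^{n}\to\C^{n}/\Gamma^{\prime}$ splits, and fix a lift $\widetilde V\subset\C^{n}$. The point is that in the coordinates $(t,x)\in\C^{n}\times N$ the left-invariant complex structure of $G$ restricts over $\widetilde V$ to the product of that of $\C^{n}$ with $J$ --- this is exactly what the frame $X_{1},\dots,X_{n},\alpha_{1}Y_{1},\dots,\alpha_{m}Y_{m}$ of $\g^{1,0}$, furnished by conditions (2) and (3), records --- while the subgroup $\Gamma^{\prime\prime}$ acts on $\pi^{-1}(\widetilde V)\cong\widetilde V\times N$ by biholomorphisms of the $N$-factor alone, independently of $t$. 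Dividing out yields a biholomorphism $\bar p^{-1}(V)\cong V\times(N/\Gamma^{\prime\prime})$ compatible with $\bar p$.

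Finally I would compute the transition functions. If the chosen lifts $\widetilde V_{1}$ and $\widetilde V_{2}$ of overlapping opens $V_{1},V_{2}$ differ by an element $\gamma^{\prime}\in\Gamma^{\prime}$ of the deck group, then unwinding the identifications above shows that the transition function $V_{1}\cap V_{2}\to\mathrm{Bihol}(N/\Gamma^{\prime\prime})$ is the \emph{constant} map whose value is the biholomorphism of $N/\Gamma^{\prime\prime}$ induced by $\phi(\gamma^{\prime})$, which is well defined because $\phi(\gamma^{\prime})$ preserves $\Gamma^{\prime\prime}$ by condition (4). Hence all transition functions lie in the countable, and therefore discrete, subgroup of $\mathrm{Bihol}(N/\Gamma^{\prime\prime})$ generated by $\{[\phi(\gamma^{\prime})]:\gamma^{\prime}\in\Gamma^{\prime}\}$, which is the assertion. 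I expect the only delicate step to be the local trivialisation: one must check that the evident smooth product decomposition of $\pi^{-1}(\widetilde V)$, together with the $\Gamma^{\prime\prime}$-action on it, is genuinely holomorphic, and this is precisely where conditions (2) and (3) --- without which $G$ would not carry this complex structure at all --- have to be used.
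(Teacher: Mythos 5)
Your argument is correct and is in substance the same as the paper's: the paper simply packages your local trivializations and constant transition functions into the single observation that $\C^{n}\times(N/\Gamma^{\prime\prime})\to G/\Gamma$ is a covering with deck group $\Gamma^{\prime}$ acting by $g\cdot(a,b)=(a+g,\phi(g)b)$, so that $G/\Gamma\to\C^{n}/\Gamma^{\prime}$ is a holomorphic bundle with structure group $\phi(\Gamma^{\prime})$. (One cosmetic point: what makes the structure group ``discrete'' is that the transition functions are locally constant, which you do establish; ``countable hence discrete'' is not the right justification.)
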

\begin{proof}
Consider the covering $\C^{n}\times (N/\Gamma^{\prime\prime})\to G/\Gamma$ such that the covering transformation is the action of $\Gamma^{\prime}$ on $\C^{n}\times (N/\Gamma^{\prime\prime})$ given by $g\cdot (a,b)=(a+g,\phi(g)b)$.
Hence we have the fiber bundle $G/\Gamma\to \C^{n}/\Gamma^{\prime}$ with the fiber $N/\Gamma^{\prime\prime}$ and the discrete structure group $\phi(\Gamma^{\prime})\subset {\rm Aut}(N)$.
Since $\phi(g)$ is a holomorphic automorphism, this fiber bundle is holomorphic. 
\end{proof}

\begin{proof}[\bf Proof of Theorem \ref{MMMTTT}]
For $L_{\beta}\in {\mathcal L}$, by Borel's results in \cite[Appendix 2]{Hir}, we have the spectral sequence $(E_{r},d_{r})$ of the filtration of $A^{p,q}(G/\Gamma,L_{\beta})$ induced by the  holomorphic fiber bundle $p:G/\Gamma\to \C^{n}/\Gamma^{\prime}$ as in Proposition \ref{FIIB} such that:\\
(1)$E_{r}$ is $4$-graded, by the fiber-degree, the base-degree and the type. Let $\ ^{p,q}E_{r}^{s,t}$ be 
the subspace of	elements of $E_{r}$ of type $(p,q)$, fiber-degree $s$ and	base-degree $t$.	We have $\ ^{p,q}E_{r}^{s,t}=0$ if $p + q = s + t$ or if one of $p, q, s, t$ is negative.\\
(2) If $p+q=s+t$, then we have 
\[\ ^{p,q}E^{s,t}_{2}\cong \sum_{i\ge 0}H^{i,i-s}_{\bar\partial}(\C^{n}/\Gamma^{\prime}, L_{\beta}\otimes{\bf H}^{p-i, q-s+i}(N/\Gamma^{\prime\prime}))
\]
where ${\bf H}^{p-i, q-s+i}(N/\Gamma^{\prime\prime})$ is the holomorphic fiber bundle $\bigcup_{b\in \C^{n}/\Gamma^{\prime}}H^{p,q}_{\bar\partial}(p^{-1}(b))$.\\
(3)The spectral sequence converges to $H_{\bar\partial }(G/\Gamma, L_{\beta})$.

By the assumption $H^{\ast,\ast}_{\bar\partial}(\n)\cong H^{\ast,\ast}_{\bar\partial }(N/\Gamma^{\prime\prime})$, the fiber bundle ${\bf H}^{p-i, q-s+i}(N/\Gamma^{\prime\prime})$ is the holomorphic vector bundle with the fiber $H^{p-i, q-s+i}_{\bar\partial}(\n)$ induced by the action $\phi$ of $\Gamma$ on $H^{p-i, q-s+i}_{\bar\partial}(\n)$.
Since the action $\phi$ on $\n$ is semi-simple, the action  of $\C^{n}$ on $H^{p-i, q-s+i}_{\bar\partial}(\n)$ induced by $\phi$ is diagonalizable.
The fiber bundle splits as ${\bf H}^{p-i, q-s+i}(N/\Gamma^{\prime\prime})=\oplus L_{\delta_{j}}$ for some $ L_{\delta_{j}}\in {\mathcal L}$.
Hence we have 
\[H^{i,i-s}_{\bar\partial}(\C^{n}/\Gamma^{\prime}, L_{\beta}\otimes{\bf H}^{p-i, q-s+i}(N/\Gamma^{\prime\prime}))=H^{i,i-s}_{\bar\partial}(\C^{n}/\Gamma^{\prime}, \bigoplus_{\delta_{j}} L_{\beta}\otimes L_{\delta_{j} }).\]
By Proposition \ref{triv}, we have $H^{i,i-s}_{\bar\partial}(\C^{n}/\Gamma^{\prime},  L_{\beta}\otimes L_{\delta_{j}})\cong H^{i,i-s}_{\bar\partial}(\C^{n}/\Gamma^{\prime})$ if $L_{\beta}\otimes L_{\delta_{j}}$ is trivial and $H^{i,i-s}_{\bar\partial}(\C^{n}/\Gamma^{\prime},  L_{\beta}\otimes L_{\delta_{j}}))=0$ if $L_{\beta}\otimes L_{\delta_{j}}$ is non-trivial.
Hence we have 
\[H^{i,i-s}_{\bar\partial}(\C^{n}/\Gamma^{\prime}, \bigoplus_{L_{\beta}\in {\mathcal L}} L_{\beta}\otimes{\bf H}^{p-i, q-s+i}(N/\Gamma^{\prime\prime}))
\cong H^{i,i-s}_{\bar\partial}(\C^{n}/\Gamma^{\prime})\otimes H^{p-i, q-s+i}_{\bar\partial}(\n).
\]
For the direct sum $\bigoplus_{L_{\beta}\in {\mathcal L}} A^{\ast,\ast}(G/\Gamma,L_{\beta})$, we consider this spectral sequence $E_{r}$.
Then we have 
\begin{multline*}
\ ^{p,q}E^{s,t}_{2}\cong \sum_{i\ge 0}H^{i,i-s}_{\bar\partial}(\C^{n}/\Gamma^{\prime}, \bigoplus_{L_{\beta}\in {\mathcal L}}  L_{\beta}\otimes{\bf H}^{p-i, q-s+i}(N/\Gamma^{\prime\prime}))\\
\cong  
\sum_{i\ge 0}H^{i,i-s}_{\bar\partial}(\C^{n}/\Gamma^{\prime})\otimes H^{p-i, q-s+i}_{\bar\partial}(\n)
\end{multline*}
This implies an isomorphism $E_{2}\cong \bigoplus_{p,q} H^{p,q}_{\bar\partial}( {\frak a}\oplus\n)$.
On the other hand, by Corollary \ref{Ijn}, we have an injection
\[  H^{p,q}_{\bar\partial}({ \frak a}\oplus\n )\to  H^{p,q}_{\bar \partial}(\bigoplus_{L_{\beta}\in {\mathcal L}} A^{\ast,\ast}(G/\Gamma,L_{\beta})\cong E_{\infty} .
\]
Hence the spectral sequence degenerates at $E_{2}$ and the theorem follows.
\end{proof}

\begin{corollary}\label{CORR}
Let  $B^{\ast,\ast}\subset A^{\ast,\ast}(G/\Gamma)$ be the subDBA of $A^{\ast,\ast}(G/\Gamma)$ given by
\[B^{p,q}=\left\langle x_{I}\wedge \alpha^{-1}_{J}\beta_{J}y_{J}\wedge \bar x_{K}\wedge \bar \alpha^{-1}_{L}\gamma_{L}\bar y_{L}{\Big \vert} \begin{array}{cc}\vert I\vert+\vert K\vert=p,\, \vert J\vert+\vert L\vert=q \\  {\rm the \, \,  restriction \, \,  of }\, \, \beta_{J}\gamma_{L}\, \,  {\rm on  \, \, \Gamma \, \, is\, \,  trivial}\end{array}\right\rangle.
\]
Then  the inclusion $B^{\ast,\ast}\subset A^{\ast,\ast}(G/\Gamma)$ induces a cohomology isomorphism
\[H^{\ast,\ast}_{\bar \partial}(B^{\ast,\ast})\cong H^{\ast,\ast}_{\bar \partial}(G/\Gamma).
\]
\end{corollary}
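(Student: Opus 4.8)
The plan is to obtain $B^{\ast,\ast}$ as the block of $A^{\ast,\ast}$ lying over the trivial line bundle, and then to read the asserted cohomology isomorphism off Theorem \ref{MMMTTT} one block at a time. First I would record that $\bigoplus_{L_{\beta}\in{\mathcal L}}A^{\ast,\ast}(G/\Gamma,L_{\beta})$ is, as a $\bar\partial$-complex, a direct sum of subcomplexes, one for each $L_\beta\in{\mathcal L}$, so its cohomology splits as $\bigoplus_{L_{\beta}\in{\mathcal L}}H^{\ast,\ast}_{\bar\partial}(G/\Gamma,L_{\beta})$. Next I would check that the subDBA $A^{\ast,\ast}$ of Definition \ref{DeA} is homogeneous for this ${\mathcal L}$-grading: the $\iota$-image of the basis monomial $x_{I}\wedge\alpha^{-1}_{J}y_{J}\wedge\bar x_{K}\wedge\bar\alpha^{-1}_{L}\bar y_{L}$ of $\bigwedge^{\ast,\ast}({\frak a}\oplus\n)^{\ast}$ equals $x_{I}\wedge\alpha^{-1}_{J}y_{J}\wedge\bar x_{K}\wedge\bar\alpha^{-1}_{L}\bar y_{L}\otimes\beta_{J}\gamma_{L}v_{\beta_{J}^{-1}\gamma_{L}^{-1}}$, and hence lies in the single summand indexed by the isomorphism class of $L_{\beta_{J}^{-1}\gamma_{L}^{-1}}$. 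So, inside the direct sum, $A^{\ast,\ast}=\bigoplus_{L_{\beta}\in{\mathcal L}}A^{\ast,\ast}_{L_{\beta}}$ with $A^{\ast,\ast}_{L_{\beta}}:=A^{\ast,\ast}\cap A^{\ast,\ast}(G/\Gamma,L_{\beta})$, and each $A^{\ast,\ast}_{L_{\beta}}$ is a subcomplex, being the intersection of two $\bar\partial$-stable subspaces.

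Since the inclusion $\Phi$ of Theorem \ref{MMMTTT} respects both of these ${\mathcal L}$-decompositions, the isomorphism it induces on cohomology is \emph{block diagonal}; and a block-diagonal map of direct sums is an isomorphism exactly when each of its blocks is. Hence, for every $L_{\beta}\in{\mathcal L}$, the inclusion $A^{\ast,\ast}_{L_{\beta}}\hookrightarrow A^{\ast,\ast}(G/\Gamma,L_{\beta})$ induces a cohomology isomorphism; taking $L_{\beta}=L_{0}$ the trivial bundle gives $H^{\ast,\ast}_{\bar\partial}(A^{\ast,\ast}_{L_{0}})\cong H^{\ast,\ast}_{\bar\partial}(G/\Gamma)$. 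As $A^{\ast,\ast}_{L_{0}}=\Phi^{-1}(A^{\ast,\ast}(G/\Gamma))$, this is the subDBA $B^{\ast,\ast}$ of Corollary \ref{GFII}, which thereby also gets proved.

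It remains to match $A^{\ast,\ast}_{L_{0}}$ with the explicit span in the statement. By Lemma \ref{trii}, $L_{\beta_{J}^{-1}\gamma_{L}^{-1}}$ is trivial precisely when the restriction of $\beta_{J}\gamma_{L}$ to $\Gamma$ is trivial, so the monomial above contributes to $A^{\ast,\ast}_{L_{0}}$ exactly under that condition. When it holds, the holomorphic trivialization identifies $A^{\ast,\ast}(G/\Gamma,L_{\beta_{J}^{-1}\gamma_{L}^{-1}})$ with $A^{\ast,\ast}(G/\Gamma)$ compatibly with $\bar\partial$ and carries the monomial to the $\Gamma$-invariant $\C$-valued form obtained by multiplying the left-invariant form $x_{I}\wedge\alpha^{-1}_{J}y_{J}\wedge\bar x_{K}\wedge\bar\alpha^{-1}_{L}\bar y_{L}$ by the function $\beta_{J}\gamma_{L}$ on $G/\Gamma$; distributing $\beta_{J}\gamma_{L}=\prod_{j\in J}\beta_{j}\prod_{l\in L}\gamma_{l}$ over the factors, this is $x_{I}\wedge\alpha^{-1}_{J}\beta_{J}y_{J}\wedge\bar x_{K}\wedge\bar\alpha^{-1}_{L}\gamma_{L}\bar y_{L}$. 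So these forms, as $(I,J,K,L)$ runs over the tuples for which $\beta_{J}\gamma_{L}$ is trivial on $\Gamma$, form a basis of $A^{\ast,\ast}_{L_{0}}$, whence $B^{\ast,\ast}=A^{\ast,\ast}_{L_{0}}$ and the claim follows. That $B^{\ast,\ast}$ is a subDBA of $A^{\ast,\ast}(G/\Gamma)$ is automatic, since $A^{\ast,\ast}_{L_{0}}$ is the intersection of the two $\bar\partial$-stable subalgebras $A^{\ast,\ast}$ and $A^{\ast,\ast}(G/\Gamma)$.

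All the mathematical content is already in Theorem \ref{MMMTTT}; what needs care here is bookkeeping — verifying that $A^{\ast,\ast}$ and $\Phi$ are genuinely ${\mathcal L}$-homogeneous once the identifications of line bundles within their isomorphism classes have been fixed (this uses $v_{\beta^{-1}}\otimes v_{\gamma^{-1}}=v_{\beta^{-1}\gamma^{-1}}$, and the remaining $\C^{\ast}$-ambiguity in those identifications only rescales the basis monomials, hence does not affect the span $B^{\ast,\ast}$), and observing that cohomology commutes with the possibly infinite direct sum over ${\mathcal L}$. The one spot where something could conceivably go wrong is the ``each block is an isomorphism'' deduction: surjectivity of $\Phi_{\ast}$ onto the whole direct sum forces $H^{\ast,\ast}_{\bar\partial}(G/\Gamma,L_{\beta})=0$ for the infinitely many classes $L_{\beta}$ not met by $A^{\ast,\ast}$. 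This is no extra hypothesis, however: the proof of Theorem \ref{MMMTTT} already identifies $H^{\ast,\ast}_{\bar\partial}\bigl(\bigoplus_{L_{\beta}\in{\mathcal L}}A^{\ast,\ast}(G/\Gamma,L_{\beta})\bigr)$ with the finite-dimensional space $H^{\ast,\ast}_{\bar\partial}({\frak a}\oplus\n)$, so the vanishing is built in and no real obstacle remains; the corollary is essentially a repackaging of the main theorem.
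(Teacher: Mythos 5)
Your proposal is correct and follows essentially the same route as the paper: the paper's own proof of this corollary consists only of the identification $\Phi^{-1}(A^{\ast,\ast}(G/\Gamma))=B^{\ast,\ast}$ via Lemma \ref{trii}, leaving implicit the block-decomposition argument (that $\Phi$ is homogeneous for the ${\mathcal L}$-grading, so the isomorphism of Theorem \ref{MMMTTT} restricts to an isomorphism on the trivial-bundle summand) which you spell out in full. Your explicit treatment of the homogeneity of $A^{\ast,\ast}$ and of the forced vanishing of the unmet blocks is exactly the bookkeeping the paper suppresses, and it is sound.
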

\begin{proof}
By Lemma \ref{trii}, 
\[\Phi(x_{I}\wedge \alpha^{-1}_{J}y_{J}\wedge \bar x_{K}\wedge \bar \alpha^{-1}_{L}\bar y_{L}\otimes \beta_{J}\gamma_{L}v_{\beta^{-1}_{J}\gamma^{-1}_{L}})\in A^{\ast,\ast}(G/\Gamma)\] if and only if the restriction of $\beta_{J}\gamma_{L}$ on $\Gamma$ is trivial.
Hence we have  $\Phi^{-1}(A^{\ast,\ast}(G/\Gamma))=B^{\ast,\ast}$.
\end{proof}

\begin{remark}
Suppose $\phi:\C^{n}\to {\rm Aut}(\n^{1,0})$ is a holomorphic map.
Since each $\alpha_{j}$ is holomorphic,
$\beta_{j}$ is trivial.
Hence we have $B^{p,0}=\bigwedge^{p,0}\g^{\ast}$.
Moreover if $N$ is a complex Lie group, then $G=\C^{n}\ltimes_{\phi}N$ is also a complex Lie group and any element of $B^{1,0}=\g^{1,0}$ is holomorphic and hence $\bar\partial B^{p,0}=0$.
Hence we have an isomorphism
\[H^{p,q}(G/\Gamma)\cong \bigwedge^{p}\g^{1,0}\otimes H^{q}_{\bar \partial}( B^{0,q}).\]
\end{remark}

\begin{remark}
We suppose the following condition:\\
($\star$) For multi-indices $J$, $L$, if the restriction of $\beta_{J}\gamma_{L}$ on $\Gamma$ is trivial, then  $\beta_{J}\gamma_{L}$ itself is trivial.\\
Then we have $B^{\ast,\ast}\subset \bigwedge ^{\ast,\ast}\g^{\ast}$ and hence  we have an isomorphism 
\[H^{\ast,\ast}_{\bar\partial}(\g)\cong H^{\ast,\ast}_{\bar \partial}(G/\Gamma).
\]
\end{remark}

\section{Examples}
\subsection{Example 1}
Let $G=\C\ltimes _{\phi}\C^{2}$ such that $\phi(x+\sqrt{-1}y)=\left(
\begin{array}{cc}
e^{x}& 0  \\
0&    e^{-x}  
\end{array}
\right)$.
Then for some $a\in \R$  the matrix $\left(
\begin{array}{cc}
e^{a}& 0  \\
0&    e^{-a}  
\end{array}
\right)$
 is conjugate to an element of $SL(2,\Z)$.
 Hence for any $0\not=b\in \R$ we have a lattice $\Gamma=(a\Z+b\sqrt{-1}\Z )\ltimes \Gamma^{\prime\prime}$ such that $\Gamma^{\prime\prime} $ is a lattice of $\C^{2}$.
 Then for a coordinate $(z_{1}=x+\sqrt{-1}y,z_{2},z_{3})\in \C\ltimes _{\phi}\C^{2}$  we have
\[\bigwedge ^{p,q}\g^{\ast}
=\bigwedge ^{p,q}\langle dz_{1},\, e^{-x}dz_{2}, e^{x}dz_{3}\rangle\otimes \langle dz_{1},\, e^{-x}d\bar z_{2},\, e^{x}d\bar z_{3}\rangle.
\]
Since we have  $e^{x}\sim e^{-\sqrt{-1}y}$, the subDBA 
\[A^{\ast,\ast}\subset \bigoplus_{L_{\beta}\in {\mathcal L}} A^{\ast,\ast}(G/\Gamma,L_{\beta})\]
as in Definition \ref{DeA}  is given by 
\begin{multline*}A ^{p,q}
=\bigwedge ^{p,q}\langle dz_{1},\, e^{-x}dz_{2}\otimes e^{-\sqrt{-1}y}v_{e^{\sqrt{-1}y}},\, e^{x}dz_{3} \otimes e^{\sqrt{-1}y}v_{e^{-\sqrt{-1}y}}\rangle \\
\bigotimes \langle d\bar z_{1},\, e^{-x}d\bar z_{2}\otimes e^{-\sqrt{-1}y}v_{e^{\sqrt{-1}y}},\, e^{x}d\bar z_{3}\otimes e^{\sqrt{-1}y}v_{e^{-\sqrt{-1}y}}\rangle.
\end{multline*}
$B^{p,q}\subset A^{p,q}(G/\Gamma)$ varies for a choice of $b\in \R$ as the following.\\
(A) If $b=2n\pi$ for $n\in \Z$, then we have:
\[B^{p,q}= \bigwedge ^{p,q}\langle dz_{1},\, e^{-x-\sqrt{-1}y}dz_{2}, e^{x+\sqrt{-1}y}dz_{3}\rangle\otimes \langle d\bar z_{1},\, e^{-x-\sqrt{-1}y}d\bar z_{2}, e^{x+\sqrt{-1}y}d\bar z_{3}\rangle.\]
(B) If $b=(2n-1)\pi$ for $n\in \Z$, then  we have:
\[B^{1,0}=\langle dz_{1}\rangle,\,\, B^{0,1}= \langle d\bar z_{1}\rangle,
\]
\[B^{2,0}= \langle dz_{2}\wedge dz_{3} \rangle ,\, B^{0,2}= \langle d\bar z_{2}\wedge d\bar z_{3} \rangle ,
\]
 \[B^{1,1}=  \langle  dz_{1}\wedge  d\bar z_{1}, \, e^{-2x-2\sqrt{-1}y}dz_{2}\wedge d\bar z_{2},\, e^{2x+2\sqrt{-1}y}dz_{3}\wedge d\bar z_{3},\, dz_{2}\wedge d\bar z_{3},\, dz_{3}\wedge d\bar z_{2} \rangle,
 \]
\[B^{3,0}= \langle  dz_{1}\wedge dz_{2}\wedge dz_{3}\rangle, \]
\begin{multline*}B^{2,1}=\langle dz_{2}\wedge dz_{3} \wedge d\bar z_{1} ,\, e^{-2x-2\sqrt{-1}y}dz_{1}\wedge dz_{2}\wedge d\bar z_{2},\\
e^{2x+2\sqrt{-1}y}dz_{1}\wedge dz_{3}\wedge d\bar z_{3} ,\, dz_{1}\wedge dz_{2}\wedge d\bar z_{3}, dz_{1}\wedge dz_{3}\wedge d\bar z_{2} \rangle,
\end{multline*}
\begin{multline*} B^{1,2} = \langle dz_{1}\wedge  d\bar z_{2}\wedge d\bar z_{3},\, e^{-2x-2\sqrt{-1}y}dz_{2}\wedge d\bar z_{1} \wedge d\bar z_{2}, \\ e^{2x+2\sqrt{-1}y}dz_{3}\wedge d\bar z_{1}\wedge d\bar z_{3}, \,
dz_{2}\wedge d\bar z_{3}\wedge d\bar z_{1},\, dz_{3}\wedge d\bar z_{2}\wedge d\bar z_{1} \rangle,
\end{multline*}
\[
B^{0,3}= \langle d \bar z_{1}\wedge d\bar z_{2}\wedge d\bar z_{3} \rangle ,
\]
\[B^{3,1}= \langle  dz_{1}\wedge dz_{2}\wedge dz_{3}\wedge d\bar z_{1}\rangle,\, B^{1,3}=\langle d \bar z_{1}\wedge d\bar z_{2}\wedge d\bar z_{3} \wedge d\bar z_{1}\rangle,
\]
\begin{multline*}B^{2,2}=\langle dz_{1}\wedge  dz_{2}\wedge d\bar z_{1} \wedge d\bar z_{3},\\
e^{-2x-2\sqrt{-1}y}dz_{1}\wedge dz_{2}\wedge d\bar z_{1} \wedge d\bar z_{2},\, e^{2x+2\sqrt{-1}y}dz_{1}\wedge dz_{3}\wedge d\bar z_{1}\wedge d\bar z_{3}, \\
dz_{2}\wedge  dz_{3}\wedge d\bar z_{2}\wedge d\bar z_{3}, dz_{1}\wedge dz_{3}\wedge d\bar z_{1}\wedge d\bar z_{2}\rangle,
\end{multline*}
\[B^{3,2}=\langle  dz_{2}\wedge dz_{3}\wedge d\bar z_{1} \wedge d\bar z_{2} \wedge d\bar z_{3}\rangle, B^{2,3}=\langle dz_{1}\wedge dz_{2}\wedge dz_{3} \wedge d\bar z_{2}\wedge d\bar z_{3}\rangle,
\]
\[B^{3,3}=\langle dz_{1}\wedge dz_{2}\wedge dz_{3} \wedge d\bar z_{1}\wedge d\bar z_{2} \wedge d\bar z_{3}\rangle.
\]
 (C) If $b\not=n\pi$ for any $n\in\Z$, then we have:
 \[B^{1,0}=\langle dz_{1}\rangle,\,\, B^{0,1}= \langle d\bar z_{1}\rangle,
\]
\[B^{2,0}= \langle dz_{2}\wedge dz_{3} \rangle ,\, B^{0,2}= \langle d\bar z_{2}\wedge d\bar z_{3} \rangle ,
\]
 \[B^{1,1}=  \langle  dz_{1}\wedge  d\bar z_{1},  dz_{2}\wedge d\bar z_{3}, dz_{3}\wedge d\bar z_{2} \rangle,
 \]
\[B^{3,0}= \langle  dz_{1}\wedge dz_{2}\wedge dz_{3}\rangle, B^{2,1}=\langle dz_{2}\wedge dz_{3} \wedge d\bar z_{1}, \,  dz_{1}\wedge dz_{2}\wedge d\bar z_{3}, \, dz_{1}\wedge dz_{3}\wedge d\bar z_{2} \rangle,
\]
\[B^{1,2} = \langle dz_{1}\wedge  d\bar z_{2}\wedge d\bar z_{3},\, dz_{2}\wedge d\bar z_{3}\wedge d\bar z_{1}, dz_{3}\wedge d\bar z_{2}\wedge d\bar z_{1} \rangle, B^{0,3}= \langle d \bar z_{1}\wedge d\bar z_{2}\wedge d\bar z_{3} \rangle ,
\]
\[B^{3,1}= \langle  dz_{1}\wedge dz_{2}\wedge dz_{3}\wedge d\bar z_{1}\rangle, B^{1,3}=\langle d \bar z_{1}\wedge d\bar z_{2}\wedge d\bar z_{3} \wedge d\bar z_{1}\rangle,
\]
 \[B^{2,2}=\langle dz_{1}\wedge  dz_{2}\wedge d\bar z_{1} \wedge d\bar z_{3}, dz_{2}\wedge dz_{3}\wedge d\bar z_{2}\wedge d\bar z_{3}, dz_{1}\wedge dz_{3}\wedge d\bar z_{1}\wedge d\bar z_{2}\rangle,
\]
\[B^{3,2}=\langle  dz_{2}\wedge dz_{3}\wedge d\bar z_{1} \wedge d\bar z_{2} \wedge d\bar z_{3}\rangle, B^{2,3}=\langle dz_{1}\wedge dz_{2}\wedge dz_{3} \wedge d\bar z_{2}\wedge d\bar z_{3}\rangle,
\]
\[B^{3,3}=\langle dz_{1}\wedge dz_{2}\wedge dz_{3} \wedge d\bar z_{1}\wedge d\bar z_{2} \wedge d\bar z_{3}\rangle.
\]
By Corollary \ref{CORR}, for each case we have an isomorphism $ H^{p,q}_{\bar \partial}(G/\Gamma)\cong B^{p,q}$.
Moreover considering the left-invariant  Hermitian metric $g=dz_{1}d\bar z_{1}+e^{-2x}dz_{2}d\bar z_{2}+ e^{2x}dz_{3}d\bar z_{3}$, we have ${\mathcal H}^{p,q}(G/\Gamma)\cong B^{p,q}$.

\begin{remark}
In the case (A),  the Dolbeault cohomology $ H^{\ast,\ast}_{\bar \partial}(G/\Gamma)$ is isomorphic to the Dolbeault cohomology of complex $3$-torus.
But $G/\Gamma$ is not homeomorphic to a complex $3$-torus.
Moreover considering the metric $g$, the space of the harmonic forms does not satisfy Hodge symmetry (i.e.  $\bar {\mathcal H}^{p,q}(G/\Gamma)\not = {\mathcal H}^{q,p}(G/\Gamma)$).
\end{remark}
\begin{remark}
By Hattori's result in \cite{Hatt}, we have an isomorphism  $H^{\ast}(G/\Gamma)\cong H^{\ast}(\g)$ of the de Rham cohomology of $G/\Gamma$ and the Lie algebra cohomology.
Hence considering the space ${\mathcal H}^{k}_{d}(\g)$  of left-invariant $d$-harmonic forms of the left-invariant Hermitian metric $g$, we have $ {\mathcal H}_{d}^{k}(\g)\cong {\mathcal H}^{k}_{d}(G/\Gamma)$.
By simple computations, in the case (C) we have the Hodge decomposition ${\mathcal H}^{k}_{d}(G/\Gamma)= \bigoplus_{p+q=k}{\mathcal H}^{p,q}(G/\Gamma)$.
Hence $G/\Gamma$ has  cohomological  properties (for example the Fr\"olicher spectral sequence degenerates at $E_{1}$)  of compact K\"ahler manifolds.
But by Arapura's result (solving Benson-Gordon's conjecture) in \cite{Ara}, $G/\Gamma$ admits no K\"ahler structure.
\end{remark}
\begin{remark}
In the case (C), an isomorphism $H^{\ast,\ast}_{\bar\partial}(\g)\cong H^{\ast,\ast}_{\bar \partial}(G/\Gamma)$ holds.
But in the other cases, this isomorphism  does not hold.
\end{remark}
\subsection{Example 2}
Let $G=\C\ltimes_{\phi} \C^{2}$ such that \[\phi(x+\sqrt{-1}y)=\left(
\begin{array}{cc}
e^{x+\sqrt{-1}y}& 0  \\
0&    e^{-x-\sqrt{-1}y}  
\end{array}
\right).\]
Then we have $a+\sqrt{-1}b, c+\sqrt{-1}d\in \C$ such that $ \Z(a+\sqrt{-1}b)+\Z(c+\sqrt{-1}d)$ is a lattice in $\C$ and
$\phi(a+\sqrt{-1}b)$ and $\phi(c+\sqrt{-1}d)$
 are conjugate to elements of $SL(4,\Z)$ where we regard  $SL(2,\C)\subset SL(4,\R)$ (see \cite{Hd}).
Hence we have a lattice $\Gamma=(\Z(a+\sqrt{-1}b)+\Z( c+\sqrt{-1}d))\ltimes_{\phi} \Gamma^{\prime\prime}$ such that $\Gamma^{\prime\prime}$ is a lattice of $\C^{2}$.
For a coordinate $(z_{1},z_{2},z_{3})\in \C\ltimes \C^{2}$, we have
\[\bigwedge^{p,q}\g^{\ast}=\bigwedge^{p,q}\langle dz_{1}, e^{-z_{1}}dz_{2}, e^{z_{1}}dz_{3}\rangle\otimes \langle d\bar z_{1}, e^{-\bar z_{1}}d\bar z_{2}, e^{\bar z_{1}}d\bar z_{3}\rangle.
\]
We have 
\begin{multline*}
A^{p,q}
\\=\bigwedge^{p,q}\langle dz_{1}, e^{-z_{1}}dz_{2}, e^{z_{1}}dz_{3}\rangle\bigotimes \langle d\bar z_{1}, e^{-\bar z_{1}}d\bar z_{2}\otimes e^{-2\sqrt{-1}y_{1}}v_{e^{2\sqrt{-1}y_{1}}}, e^{\bar z_{1}}d\bar z_{3}\otimes e^{2\sqrt{-1}y_{1}}v_{e^{-2\sqrt{-1}y_{1}}}\rangle
\end{multline*}
for $z_{1}=x_{1}+\sqrt{-1}y_{1}$.

If $b,d \in \pi\Z$, then we have 
\[H^{p,q}(G/\Gamma)\cong B^{p,q}=\bigwedge^{p,q}\langle dz_{1}, e^{-z_{1}}dz_{2}, e^{z_{1}}dz_{3}\rangle\otimes \langle d\bar z_{1}, e^{- z_{1}}d\bar z_{2}, e^{ z_{1}}d\bar z_{3}\rangle.\]

If $b\not \in \pi\Z$ or $c\not \in\pi\Z$, then we have
\[B^{0,1}=\langle d\bar z_{1}\rangle,\,  B^{0,2}=\langle d\bar z_{2}\wedge d\bar z_{3}\rangle,\, B^{0,3}=\langle d\bar z_{1}\wedge d\bar z_{2}\wedge d\bar z_{3}\rangle
\]
and 
\[H^{p,q}(G/\Gamma)\cong B^{p,q}=\bigwedge^{p}\langle dz_{1}, e^{-z_{1}}dz_{2}, e^{z_{1}}dz_{3}\rangle\otimes B^{0,q}.
\]

 {\bf  Acknowledgements.} 

The author would like to express his gratitude to   Toshitake Kohno for helpful suggestions and stimulating discussions.
He would also like to thank  Takumi Yamada for  helpful comments.
This research is supported by JSPS Research Fellowships for Young Scientists.

\end{document}